\def\section{\@startsection{section}{1}%
  \z@{1.2\linespacing\@plus\linespacing}{.5\linespacing}%
  {\normalfont\scshape\centering}}
\def\subsection{\@startsection{subsection}{2}%
  \z@{0.9\linespacing\@plus.7\linespacing}{-.5em}%
  {\normalfont\bfseries}}
\definecolor{gr}{rgb}   {0.,   0.6,   0.25 }
\definecolor{mg}{rgb}   {0.85,  0.,    0.85}
\definecolor{marin}{rgb}   {0.,   0.65,   0.25}
\definecolor{rouge}{rgb}   {0.8,   0.,   0.}
\definecolor{orange}{rgb}   {0.8,   0.4,   0.}
\newcommand{\Bk}{\color{black}}
\newtheorem{theorem}{Theorem}[section]
\newtheorem{lemma}[theorem]{Lemma}
\theoremstyle{definition}
\theoremstyle{remark}
\newtheorem{remark}[theorem]{Remark}
\numberwithin{equation}{section}
\newcommand{\ee}{\hskip0.15ex}
\newcommand{\me}{\hskip-0.15ex}
\newcommand{\dd}[1]{_{\raise-0.3ex\hbox{$\scriptstyle #1$}}}
\newcommand\rd{{\mathrm d}}
\renewcommand{\div}{\operatorname{\rm div}}
\newcommand\R{{\mathbb R}}
\newcommand\T{\mathbb{T}}
\newcommand\Z{{\mathbb Z}}
\newcommand\cA{{\mathcal A}}
\newcommand\B{{\mathsf{B}}}
\newcommand\cC{{\mathcal C}}
\newcommand\F{{\mathscr F}}
\newcommand\cI{{\mathcal I}}
\newcommand\cO{{\mathcal O}}
\newcommand\cR{{\mathcal R}}
\newcommand\cS{{\mathcal S}}
\newcommand\sT{{\mathcal T}}
\newcommand{\bB}{\boldsymbol{\mathsf{B}}}
\newcommand{\be}{{\boldsymbol{\mathsf{E}}}}
\newcommand{\bK}{\boldsymbol{\mathsf{K}}}
\newcommand{\bL}{\boldsymbol{\mathsf{L}}}
\newcommand{\bM}{\boldsymbol{\mathsf{M}}}
\newcommand{\bn}{\boldsymbol{\mathsf{N}}}
\newcommand{\bp}{\boldsymbol{\mathsf{P}}}
\newcommand{\bu}{\boldsymbol{\mathsf{u}}}
\newcommand{\bv}{\boldsymbol{\mathsf{v}}}
\newcommand{\bt}{\boldsymbol{t}}
\newcommand{\rA}{{\mathsf{A}}}
\newcommand{\rb}{{\mathsf{b}}}
\newcommand{\rc}{{\mathsf{c}}}
\newcommand{\rB}{{\mathsf{B}}}
\newcommand{\rG}{{\mathsf{G}}}
\newcommand{\rh}{{\mathsf{g}}}
\newcommand{\rH}{{\mathsf{H}}}
\newcommand{\rL}{{\mathsf{L}}}
\newcommand{\ru}{{\mathsf{u}}}
\newcommand{\rv}{{\mathsf{v}}}
\newcommand{\La}{{\ee\bL\me}}
\newcommand{\tbu}{\bu^*}
\newcommand {\am}{\mathsf{a}}
\newcommand {\Id}{\mathbb I}
\newcommand{\meas}{\mathrm{meas}}
\begin{document}

\title[High frequency oscillations of first eigenmodes]{\bf High frequency oscillations\\ of first eigenmodes  in axisymmetric shells \\ as the thickness tends to zero  }

\author{Marie Chaussade-Beaudouin}

\author{Monique Dauge}

\author{Erwan Faou}

\author{Zohar Yosibash}







\begin{abstract}
The lowest eigenmode of thin axisymmetric shells is investigated for two physical models (acoustics and elasticity) as the shell thickness ($2\varepsilon$) tends to zero. Using a novel asymptotic expansion we determine the behavior of the eigenvalue $\lambda(\varepsilon)$ and the eigenvector
angular frequency $k(\varepsilon)$ for shells with Dirichlet boundary conditions along the lateral boundary, and  natural boundary conditions on the other parts.

First, the scalar Laplace operator for acoustics is addressed, for which $k(\varepsilon)$ is always zero. In contrast to it, for the Lam\'e system of linear elasticity several different  types of shells are defined, characterized by their geometry, for which $k(\varepsilon)$ tends to infinity as $\varepsilon$ tends to zero.
For two families of shells: cylinders and elliptical barrels we explicitly provide $\lambda(\varepsilon)$ and
$k(\varepsilon)$ and demonstrate by numerical examples the different behavior as $\varepsilon$ tends to zero.
\end{abstract}
\maketitle

{
\parskip 1pt
\tableofcontents
}


\section{Introduction}

The lowest natural frequency of shell-like structures is of major importance in engineering because it is one of the driving considerations in designing thin structures (for example containers). It is associated with linear isotropic elasticity, governed by the Lam\'e system.
The elastic lowest eigenmode in axisymmetric homogeneous isotropic shells was address by W.\ Soedel \cite{SoedelEncyclopedia} in the {\em Encyclopedia of Vibration}:
\begin{quote}
\emph{
[We observe] a phenomenon which is particular to many deep shells, namely that the lowest natural frequency does not correspond to the simplest natural mode, as is typically the case for rods, beams, and plates.}
\end{quote}
This citation emphasizes that for shells these lowest natural frequencies may hide some interesting ``strange'' behavior.
The expression ``deep shell'' contrasts with ``shallow shells'' for which the main curvatures are of same order as the thickness. Typical examples of deep shells are cylindrical shells, spherical caps, or barrels (curved cylinders).

In acoustics, driven by the scalar Laplace operator, it is well known that, when Dirichlet conditions are applied on the whole boundary, the first eigenmode is simple in both senses that it is not multiple and that it is invariant by rotation. We will revisit this result, in order to extend it to mixed Dirichlet-Neumann conditions. In contrast to the scalar Laplace operator, the simple behavior of the first eigenmode does not carry over
to the vector elliptic system - linear elasticity. Relying on asymptotic formulas exhibited in our previous work \cite{ChDaFaYo16a}, we analyze two families of shells already investigated in \cite{ArtioliBeiraoHakulaLovadina2009}.
Doing that, we can compare numerical results provided by several different models: The exact Lam\'e model, surfacic models (Love and Naghdi), and our 1D scalar reduction.

The first of these families are cylindrical shells. We show that the lowest eigenvalue\footnote{With the natural frequency $f$, the pulsation $\omega$ and the eigenvalue $\lambda$, we have the relations
\[
   \lambda = \omega^2 = (2\pi f)^2 .
\]}
decays proportionally to the thickness $2\varepsilon$ and that the angular frequency $k$ of its mode tends to infinity like $\varepsilon^{-1/4}$.

The second family is a family of elliptic barrels which we call ``Airy barrels''.  Elliptic means that the two main curvatures (meridian and azimuthal) of the midsurface $\cS$ are non-zero and of the same sign. Airy barrels are characterized by the following relations:
\begin{itemize}
\item The meridian curvature is smaller (in modulus) than the azimuthal curvature at any point of the midsurface $\cS$,
\item The meridian curvature attains its minimum (in modulus) on the boundary of $\cS$.
\end{itemize}
For general elliptic barrels, the first eigenvalue tends to a positive limit $\am_0$ as the thickness tends to $0$. This quantity is proportional to the minimum of the squared meridian curvature. More specifically, for Airy barrels, the azimuthal frequency $k$ is asymptotically proportional to $\varepsilon^{-3/7}$ as $\varepsilon\to0$. Besides, for the particular family of Airy barrels that we study here, a very interesting (and somewhat non-intuitive) phenomenon occurs: For moderately thick barrels $k$ stay equal to $0$ and there is a threshold for $\varepsilon$ below which $k$ has a jump and start growing to infinity.

We start by presenting the angular Fourier transformation in a coordinate independent setting in
sect.\,\ref{s:2} followed by the introduction of the \Bk domains and problems of interest in sect.\,\ref{s:3}. Sect.\,\ref{s:4} is devoted to cases when the angular frequency $k$ of the first mode is zero or converges to a finite value as $\varepsilon\to0$. Cylindrical shells are investigated in sect.\,\ref{s:5} and barrels in sect.\,\ref{s:6}. We conclude in sect.\,\ref{s:7}.

\section{Axisymmetric problems}
\label{s:2}
Before particularizing every object with the help of cylindrical coordinates, we present axisymmetric problems in an abstract setting that exhibits the intrinsic nature of these objects, in particular the angular Fourier coefficients. This intrinsic definition allows to prove that the Fourier coefficients of eigenvectors are eigenvectors if the operator under examination has certain commutation properties.

\subsection{An abstract setting}
Let us consider an axisymmetric domain in $\R^3$. This means that $\Omega$ is invariant by all rotations around a chosen axis $\cA$: For all $\theta\in\T=\R/2\pi\Z$, let $\cR_\theta$ be the rotation of angle $\theta$ around $\cA$. So we assume
\[
   \forall\theta\in\T,\quad \cR_\theta\Omega = \Omega.
\]
Let $\bt=(t_1,t_2,t_3)$ be Cartesian coordinates in $\R^3$. The Laplace operator $\Delta=\partial^2_{t_1}+\partial^2_{t_2}+\partial^2_{t_3}$ is invariant by rotation. This means that for any function $u$
\[
   \forall\theta\in\T,\quad\forall\bt\in\R^3,\quad
   \Delta \big(\ru (\cR_\theta \bt)\big) = (\Delta \ru) (\cR_\theta \bt).
\]
The Lam\'e system $\bL$ of homogeneous isotropic elasticity acts on 3D displacements $\bu=\bu(\bt)$ that are functions with values in $\R^3$. The definition of rotation invariance involves non only rotation of coordinates, but also rotations of displacement vectors. Let us introduce the transformation $\rG_\theta:\bu\mapsto\bv$ between the two displacement vectors $\bu$ and $\bv$
\[
   \rG_\theta(\bu)=\bv \quad\Longleftrightarrow\quad
   \forall\bt,\quad \bv(\bt) = \cR_{-\theta} \big(\bu(\cR_\theta\bt)\big).
\]
Then the Lam\'e system $\bL$ commutes with $\rG_\theta$: For any displacement $\bu$
\begin{equation}
\label{eq:rot1}
   \forall\theta\in\T,\quad \bL(\rG_\theta\bu) = \rG_\theta (\bL\bu).
\end{equation}
Now, in the scalar case, if we define the transformation $\rG_\theta$ by $(\rG_\theta \ru)(\bt)=\ru(\cR_\theta\bt)$, we also have the commutation relation for the Laplacian
\begin{equation}
\label{eq:rot2}
   \forall\theta\in\T,\quad \Delta(\rG_\theta\ru) = \rG_\theta (\Delta\ru).
\end{equation}
The set of transformations $\big(\rG_\theta\big)_{\theta\in\T}$ has a group structure, isomorphic to that of the torus $\T$:
\[
   \rG_\theta \circ \rG_{\theta'} = \rG_{\theta+\theta'},\quad \theta,\theta'\in\T.
\]
The rotation invariance relations \eqref{eq:rot1} and \eqref{eq:rot2} motivate the following angular Fourier transformation $\T\ni\theta\mapsto k\in\Z$ (here $u$ is a scalar or vector function $\ru$ or $\bu$)
\begin{equation}
\label{eq:four}
   \widehat u\ee^k(\bt) = \frac{1}{2\pi} \int_\T (\rG_\varphi u)(\bt)\,e^{-ik\varphi}\,\rd\varphi,
   \quad \bt\in\Omega,\quad k\in\Z.
\end{equation}
Then each Fourier coefficient $\widehat u\ee^k$ enjoys the property:
\begin{equation}
\label{eq:inva}
   (\rG_\theta \widehat u\ee^k)(\bt) = e^{ik\theta}\,\widehat u\ee^k(\bt),\quad \bt\in\Omega,\quad \theta\in\T\,,
\end{equation}
and each pair of Fourier coefficients $\widehat u\ee^k$ and $\widehat u\ee^{k'}$ with $k\neq k'$ satisfies
\[
   \int_\T \widehat u\ee^k(\cR_\theta\bt)\cdot\widehat u\ee^{k'}(\cR_\theta\bt)\,\rd\theta = 0,\quad \bt\in\Omega,
\]
which implies that $\widehat u\ee^k$ and $\widehat u\ee^{k'}$ are  orthogonal along each orbit contained in $\Omega$ and hence in $L^2(\Omega)$. 

The inverse Fourier transform is given by
\begin{equation}
\label{eq:inv}
   u(\bt) =
   \sum_{k\in\Z} \widehat u\ee^k(\bt) ,\quad \bt\in\Omega.
\end{equation}
The function $u$ is said {\em unimodal} if there exists $k_0\in\Z$ such that for all $k\neq k_0$, $\widehat u\ee^k=0$, and $\widehat u\ee^{k_0}\neq0$. Such a function satisfies
\[
   (\rG_\theta u)(\bt) = e^{ik_0\theta}\, u(\bt),\quad \bt\in\Omega,\quad \theta\in\T.
\]

For an operator $\rA$ that commutes with the transformations $\rG_\theta$, i.e., $\rG_\theta\rA=\rA\rG_\theta$, as in \eqref{eq:rot1} and \eqref{eq:rot2}, there holds for any $k\in\Z$
\[
   \widehat{\rA u}{}^k = \frac{1}{2\pi} \int_\T (\rG_\varphi \rA u)\,e^{-ik\varphi}\,\rd\varphi
   = \frac{1}{2\pi} \int_\T (\rA \rG_\varphi u)\,e^{-ik\varphi}\,\rd\varphi = \rA\widehat u\ee^k.
\]
In particular, if $\rA u = \lambda u$, then $\widehat{\rA u}{}^k = \lambda \widehat u\ee^k$. We deduce from the latter equality that
\[
   \lambda \widehat u\ee^k = \rA\widehat u\ee^k.
\]
Therefore any nonzero Fourier coefficient of an eigenvector is itself an eigenvector. We have proved

\begin{lemma}
\label{lem:Fev}
Let $\lambda$ be an eigenvalue of an operator $\rA$ that commutes with the group of transformations $\{\rG_\theta\}_{\theta\in\T}$. Then the associate eigenspace has a basis of unimodal vectors.
\end{lemma}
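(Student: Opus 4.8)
The plan is to read the result off directly from the angular Fourier decomposition \eqref{eq:inv}. Let $E_\lambda$ denote the eigenspace associated with $\lambda$ and pick any $u\in E_\lambda$. First I would write $u=\sum_{k\in\Z}\widehat u^k$ as in \eqref{eq:inv} and invoke the elementary computation carried out just above the statement: since $\rA$ commutes with every $\rG_\theta$, Fourier projection commutes with $\rA$, that is $\widehat{\rA u}^k=\rA\widehat u^k$, so $\rA\widehat u^k=\lambda\widehat u^k$ for every $k$ and each coefficient $\widehat u^k$ is either zero or again an eigenvector for $\lambda$. Moreover, by \eqref{eq:inva}, whenever $\widehat u^k\neq0$ it is unimodal with angular frequency $k$. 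Hence every $u\in E_\lambda$ is a sum of unimodal eigenvectors.

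Next I would sort these contributions by frequency. For $k\in\Z$ set $E_\lambda^k=\{v\in E_\lambda:\rG_\theta v=e^{ik\theta}v\ \text{for all }\theta\in\T\}$, the subspace of unimodal eigenvectors of frequency $k$ (together with $0$); by construction $\widehat u^k\in E_\lambda^k$. The orthogonality relation recorded just before \eqref{eq:inv} shows that the spaces $E_\lambda^k$, $k\in\Z$, are pairwise orthogonal in $L^2(\Omega)$, so their algebraic sum is direct. In the situations of interest $\rA$ is an elliptic operator on a bounded domain, hence $\dim E_\lambda<\infty$; consequently only finitely many $E_\lambda^k$ are nonzero, for each $u$ only finitely many coefficients $\widehat u^k$ fail to vanish, and $E_\lambda=\bigoplus_{k\in\Z}E_\lambda^k$. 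Choosing a basis of each nonzero $E_\lambda^k$ and taking the union then produces a basis of $E_\lambda$ consisting of unimodal vectors.

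The argument is essentially a one-liner once the commutation $\rG_\theta\rA=\rA\rG_\theta$ is granted, so the only points that need care are the functional-analytic bookkeeping: one should check that the Fourier coefficients $\widehat u^k$ stay in the domain of $\rA$ (which follows from the commutation together with the continuity of $\theta\mapsto\rG_\theta$) and that the series \eqref{eq:inv} converges in the appropriate topology, so that the splitting $E_\lambda=\bigoplus_k E_\lambda^k$ is legitimate. This is immediate once $E_\lambda$ is known to be finite dimensional; in a purely abstract Hilbert-space setting one would instead phrase the conclusion with a Hilbert (orthonormal) basis of unimodal vectors in place of an algebraic one. I do not expect any genuine obstacle beyond these standard verifications.
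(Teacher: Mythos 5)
Your proposal is correct and follows essentially the same route as the paper: the commutation $\rG_\theta\rA=\rA\rG_\theta$ gives $\widehat{\rA u}^k=\rA\widehat u^k$, so every nonzero Fourier coefficient $\widehat u^k$ of an eigenvector is itself a unimodal eigenvector, and the inverse formula \eqref{eq:inv} then yields the unimodal basis. The only difference is that you make explicit the bookkeeping (orthogonality of the spaces $E_\lambda^k$, finite dimensionality of the eigenspace) that the paper leaves implicit, which is a harmless and sensible addition.
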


\begin{remark}
\label{rem:Fev}
If moreover the operator $\rA$ is self-adjoint with real coefficients, the eigenspaces are real. Since for any real function and $k\neq0$, the Fourier coefficient $\widehat u\ee^{-k}$ is the conjugate of $\widehat u\ee^k$, the previous lemma yields that if there is an eigenvector of angular eigenfrequency $k$, there is another one of angular eigenfrequency $-k$ associated with the same eigenvalue.
\end{remark}

\subsection{Cylindrical coordinates}
Let us choose cylindrical coordinates $(r,\varphi,\tau)\in\R_+\times\T\times\R$ associated with the axis $\cA$. This means that $r$ is the distance to $\cA$, $\tau$ an abscissa along $\cA$, and $\varphi$ a rotation angle around $\cA$. We write the change of variables as
\[
   \bt=\sT(r,\varphi,\tau)\quad\mbox{with}\quad
   t_1 = r\cos\varphi,\ \
   t_2 = r\sin\varphi,\ \
   t_3 = \tau\,.
\]
The cylindrical coordinates of the rotated point $\cR_\theta\bt$ are $(r,\varphi+\theta,\tau)$.

\subsubsection{Scalar case}
The Laplace operator in cylindrical coordinates writes
\[
   \Delta = \partial^2_r + \frac{1}{r} \partial_r + \frac{1}{r^2} \partial^2_\varphi + \partial^2_\tau\,.
\]
The classical angular Fourier transform for scalar functions is now
\begin{equation}
\label{eq:fou2}
   \ru^k(r,\tau) = \frac{1}{2\pi} \int_\T \ru(\sT(r,\varphi,\tau)\big)\,e^{-ik\varphi}\,\rd\varphi,
   \quad (r,\tau)\in\omega,\quad k\in\Z,
\end{equation}
where $\omega$ is the meridian domain of $\Omega$. We have the relations
\begin{equation}
\label{eq:rel1}
   \ru^k(r,\tau)\,e^{ik\varphi} =  \widehat \ru\ee^k \big(\sT(r,\varphi,\tau)\big),\quad
   (r,\tau)\in\omega,\quad \varphi\in\T,\quad k\in\Z
\end{equation}
and the classical inverse Fourier formula (compare with \eqref{eq:inv})
\[
   \ru(\sT(r,\varphi,\tau)\big) = \sum_{k\in\Z} \ru^k(r,\tau)\,e^{ik\varphi}\,.
\]
The Laplace operator at frequency $k$ is
\[
   \Delta^{(k)} =
   \partial^2_r + \frac{1}{r} \partial_r - \frac{k^2}{r^2} + \partial^2_\tau\,,
\]
and we have the following diagonalization of $\Delta$
\[
   (\Delta \ru)^k = \Delta^{(k)} \ru^k, \quad k\in\Z.
\]

\subsubsection{Vector case}
Now, an option to find a coordinate basis for the representation of displacements is to consider the partial derivatives of the change of variables $\sT$
\[
   \be_r = \partial_r\sT,\quad
   \be_\varphi = \partial_\varphi\sT,\quad\mbox{and}\quad
   \be_\tau = \partial_\tau\sT.
\]
If we denote by $\be_{t_1}$, $\be_{t_2}$, and $\be_{t_3}$ the orthonormal basis associated with Cartesian coordinates $\bt$, we have
\[
   \be_r = \be_{t_1}\cos\varphi + \be_{t_2}\sin\varphi,\quad
   \be_\varphi = -r\be_{t_1}\sin\varphi + r\be_{t_2}\cos\varphi,
   \quad\mbox{and}\quad \be_\tau=\be_{t_3}\,.
\]
We note the effect of the rotations $\cR_\theta$ on these vectors (we omit the axial coordinate $\tau$)
\begin{equation}
\label{eq:rot3}
   \be_r(r,\varphi+\theta) = (\cR_\theta\be_r)(r,\varphi)
   \quad\mbox{and}\quad
   \be_\varphi(r,\varphi+\theta) = (\cR_\theta\be_\varphi)(r,\varphi)
\end{equation}
and $\be_\tau$ is constant and invariant.

The contravariant components of a displacement $\bu$ in the Cartesian and cylindrical bases are defined such that
\[
   \bu = \ru^{t_1}\be_{t_1} + \ru^{t_2}\be_{t_2} + \ru^{t_3}\be_{t_3}
   = \ru^{r}\be_r+\ru^{\varphi}\be_\varphi + \ru^{\tau}\be_\tau\,.
\]
Covariant components $\ru_j$ are the components of $\bu$ in dual bases. Here we have
\[
   \ru_{t_i} = \ru^{t_i},\ i=1,2,3,\quad\mbox{and}\quad
   \ru_{r} = \ru^{r},\ \ru_{\varphi} = r^2\ru^{\varphi},\ \ru_{\tau} = \ru^{\tau}.
\]
Using relations \eqref{eq:rot3}, we find the representation of transformations $\rG_\theta$
\[
   \rG_\theta\bu (\bt) =
   \ru^{r}(r,\varphi+\theta,\tau)\,\be_r(r,\varphi)
   +\ru^{\varphi}(r,\varphi+\theta,\tau)\,\be_\varphi(r,\varphi)
   + \ru^{\tau}(r,\varphi+\theta,\tau)\,\be_\tau\,,
\]
with $\bt = \sT(r,\varphi,\tau)$. Then the classical Fourier coefficient of a displacement $\bu$ is:
\[
   \bu^k(r,\varphi,\tau) = (\ru^{r})^k(r,\tau)\,\be_r(r,\varphi)
   +(\ru^{\varphi})^k(r,\tau)\,\be_\varphi(r,\varphi)
   + (\ru^{\tau})^k(r,\tau)\,\be_\tau,
\]
where $(\ru^{a})^k$ is the Fourier coefficient given by the classical formula \eqref{eq:fou2} for $\ru=\ru^a$ with $a=r,\varphi,\tau$. We have a relation similar to \eqref{eq:rel1}, valid for displacements:
\begin{equation}
\label{eq:rel2}
   \bu^k(r,\tau)\,e^{ik\varphi} =  \widehat \bu\ee^k \big(\sT(r,\varphi,\tau)\big),\quad
   (r,\tau)\in\omega,\quad \varphi\in\T,\quad k\in\Z.
\end{equation}
Let $\bL$ be the Lam\'e system. When written in cylindrical coordinates in the basis $(\be_r,\be_\varphi,\be_\tau)$, $\bL$ has its coefficients independent of the angle $\varphi$. Replacing the derivative with respect to $\varphi$ by $ik$ we obtain the parameter dependent system $\bL^{(k)}$ that determines the diagonalization of $\bL$
\begin{equation}
\label{eq:Lk}
   (\bL \bu)^k = \bL^{(k)} \bu^k, \quad k\in\Z.
\end{equation}

\section{Axisymmetric shells}

\label{s:3}
We are interested in 3D axisymmetric domains $\Omega=\Omega^\varepsilon$ that are thin in one direction, indexed by their thickness parameter $\varepsilon$.
Such $\Omega^\varepsilon$ is defined by its midsurface $\cS$: We assume that the surface $\cS$ is a smooth bounded connected manifold with boundary in $\R^3$ and that it is orientable, so that there exists a smooth unit normal field $\bp\mapsto\bn(\bp)$ on $\cS$.  For $\varepsilon>0$ small enough the following map is one to one and smooth
\begin{equation}
\label{1E1}
\begin{array}{cccc}
   \Phi : & \cS\times(-\varepsilon,\varepsilon) &\to &\Omega^\varepsilon \\
   & (\bp,x_3) &\mapsto& \bt=\bp+x_3\,\bn(\bp).
\end{array}
\end{equation}
The actual thickness $h$ of $\Omega$ is $2\varepsilon$ (we keep this thickness $h=2\varepsilon$ in mind to bridging with some results of the literature). Such bodies represent (thin) shells in elasticity, whereas they can be called layer domains or thin domains in other contexts.

The boundary of $\Omega^\varepsilon$ has two parts:
\begin{enumerate}
\item Its lateral boundary $\partial_0\Omega^\varepsilon := \Phi\big(\partial\cS\times(-\varepsilon,\varepsilon)\big)$,
\item The rest of its boundary (natural boundary) $\partial_1\Omega^\varepsilon := \partial\Omega^\varepsilon\setminus \partial_0\Omega^\varepsilon$.
\end{enumerate}
The boundary conditions that will be imposed are Dirichlet on $\partial_0\Omega^\varepsilon$ and Neumann on $\partial_1\Omega^\varepsilon$. We consider the two following eigenvalue problems on $\Omega^\varepsilon$, posed in variational form: Let
\[
   V_\Delta(\Omega^\varepsilon) := \{\ru \in H^1(\Omega^\varepsilon) \, , \quad
   \ru = 0 \quad \mbox{on}\quad \partial_0\Omega^\varepsilon\},
\]
and
\[
   V_{\bL}(\Omega^\varepsilon) := \{\bu = (\ru_{t_1},\ru_{t_2},\ru_{t_3}) \in H^1(\Omega^\varepsilon)^3 \, , \quad
   \bu = 0 \quad \mbox{on}\quad \partial_0\Omega^\varepsilon\}.
\]

(i) For the Laplace operator: Find $\lambda\in\R$ and $\ru\in V_\Delta(\Omega^\varepsilon)$, $\ru\neq0$ such that
\begin{equation}
\label{eq:delt}
   \forall\ru^*\in V_\Delta(\Omega_\varepsilon),\quad
   \int_{\Omega^\varepsilon} \nabla \ru \cdot \nabla \ru^* \,\rd\bt =
   \lambda \int_{\Omega^\varepsilon}  \ru \,  \ru^* \,\rd\bt.
\end{equation}
(ii) For the Lam\'e operator: Find $\lambda\in\R$ and $\bu\in V_{\bL}(\Omega^\varepsilon)$, $\bu\neq0$ such that
\begin{equation}
\label{eq:lame}
   \int_{\Omega^\varepsilon} A^{ijlm} e_{ij}(\bu) \,e_{lm}(\tbu) \, \rd \Omega^\varepsilon=
   \lambda \int_{\Omega^\varepsilon} \ru^{t_i} \ru^*_{t_i} \, \rd \Omega^\varepsilon.
\end{equation}
Here we have used the convention of repeated indices, $A_{ijlm}$ is the material tensor associated with the Young modulus $E$ and the Poisson coefficient $\nu$
\begin{equation}
\label{eq:Amat}
   A^{ijlm} = \frac{E\nu}{(1+\nu)(1 - 2\nu)} \delta^{ij}\delta^{lm}
   + \frac{E}{2(1 + \nu)} (\delta^{il}\delta^{jm} + \delta^{im}\delta^{jl}),
\end{equation}
and the covariant components of the train tensor are given by
\[
   e_{ij}(\bu) = \frac12 (\partial_{t_i} \ru_{t_j} + \partial_{t_j} \ru_{t_i}).
\]
The associated $3\times3$ system writes
\[
   \bL = -\frac{E}{2(1+\nu)(1 - 2\nu)} \Big( (1-2\nu)\Delta + \nabla\div \Big).
\]

Both problems \eqref{eq:delt} and \eqref{eq:lame} have discrete spectra and their first eigenvalues are positive. We denote by $\lambda_\Delta(\varepsilon)$ and $\lambda_\La(\varepsilon)$ the smallest eigenvalues of \eqref{eq:delt} and \eqref{eq:lame}, respectively. By Lemma \ref{lem:Fev}, the associate eigenspaces have a basis of unimodal vectors. By Remark \ref{rem:Fev}, in each eigenspace some eigenvectors have a nonnegative angular frequency $k$. We denote by $k_\Delta(\varepsilon)$ and $k_\La(\varepsilon)$ the smallest nonnegative angular frequencies of eigenvectors associated with the the first eigenvalues $\lambda_\Delta(\varepsilon)$ and $\lambda_\La(\varepsilon)$, respectively.

In the next sections, we exhibit cases where the angular frequencies $k(\varepsilon)$ converge to a finite limit as $\varepsilon\to0$ (the quiet cases) and other cases where $k(\varepsilon)$ tends to infinity as $\varepsilon\to0$ (the sensitive or excited cases).

\section{Quiet cases}

\label{s:4}
We know (or reasonably expect) convergence of $k(\varepsilon)$ for the Laplace operator and for plane shells.

\subsection{Laplace operator}
Let us start with an obvious case. Suppose that the shells are plane, i.e. $\cS$ is an open set in $\R^2$. Then $\Omega^\varepsilon$ is a plate The axisymmetry then implies that $\cS$ is a disc or a ring. Let $(x_1,x_2)$ be the coordinates in $\cS$ and $x_3$ be the normal coordinate. In this system of coordinates
\begin{equation}
\label{eq:plate}
   \Omega^\varepsilon =
   \cS \times (-\varepsilon,\varepsilon)
\end{equation}
and the Laplace operator separates variables. One can write
\begin{equation}
\label{eq:sepv}
   \Delta_{\Omega^\varepsilon} =
   \Delta_\cS \otimes \Id_{(-\varepsilon,\varepsilon)} + \Id_\cS \otimes \Delta_{(-\varepsilon,\varepsilon)}\,.
\end{equation}
Here $\Delta_{\Omega^\varepsilon}$ is the 3D Laplacian on $\Omega^\varepsilon$ with Dirichlet conditions on $\partial_0\Omega^\varepsilon$ and Neumann conditions on the rest of the boundary, $\Delta_\cS$ is the 2D Laplacian with Dirichlet conditions on $\partial\cS$, and $\Delta_{(-\varepsilon,\varepsilon)}$ is the 1D Laplacian on $(-\varepsilon,\varepsilon)$ with Neumann conditions in $\pm\varepsilon$. Then the eigenvalues of $\Delta_{\Omega^\varepsilon}$ are all the sums of an eigenvalue of $\Delta_\cS$ and of an eigenvalue of $\Delta_{(-\varepsilon,\varepsilon)}$. The first eigenvalue $\lambda_\Delta(\varepsilon)$ of \eqref{eq:delt} is the first eigenvalue of $-\Delta_{\Omega^\varepsilon}$. Since the first eigenvalue of $-\Delta_{(-\varepsilon,\varepsilon)}$ is $0$, we have
\[
   \lambda_\Delta(\varepsilon) = \lambda_\cS
\]
and the corresponding eigenvector is $\ru(x_1,x_2,x_3)=\rv(x_1,x_2)$ where $(\lambda_\cS,\rv)$ is the first eigenpair of $-\Delta_\cS$. Thus $k_\Delta(\varepsilon)$ is independent of $\varepsilon$, and is the angular frequency of $\rv$.

In the case of a shell that is not a plate, the equality \eqref{eq:sepv} is no more true. However, if $\Delta_\cS$ denotes now the Laplace-Beltrami on the surface $\cS$ with Dirichlet boundary condition, an extension of the result\footnote{In \cite{Schatzman96a}, the manifold $\cS$ (denoted there by $M$) is without boundary. We are convinced that all proofs can be extended to the Dirichlet lateral boundary conditions when $\cS$ has a smooth boundary.} of \cite{Schatzman96a} yields that the smallest eigenvalue of the right-hand side of \eqref{eq:sepv} should converge to the smallest eigenvalue of $\Delta_{\Omega^\varepsilon}$. An extension of \cite[Th.\,4]{Schatzman96a} gives, more precisely, that
\begin{equation}
\label{eq:asyd}
   \lambda_\Delta(\varepsilon) = \lambda_\cS + \am_1 \varepsilon + \cO(\varepsilon^2),
   \quad\mbox{as}\ \ \varepsilon\to0,
\end{equation}
for some coefficient $\am_1$ independent on $\varepsilon$.
Concerning the angular frequency $k_\Delta(\varepsilon)$, a direct argument allows to conclude.

\begin{lemma}
\label{lem:lap}
Let $\Omega^\varepsilon$ be an axisymmetric shell. The first eigenvalue \eqref{eq:delt} of the Laplace operator is simple and $k_\Delta(\varepsilon)=0$.
\end{lemma}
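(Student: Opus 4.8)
The plan is to prove both assertions — simplicity and $k_\Delta(\varepsilon)=0$ — together, by exhibiting a first eigenvector that is strictly positive on $\Omega^\varepsilon$ and invoking a Krein–Rutman / Perron–Frobenius argument. First I would note that the bilinear form on the left of \eqref{eq:delt} is the standard Dirichlet form, and that the associated operator (Laplacian with Dirichlet conditions on $\partial_0\Omega^\varepsilon$ and Neumann conditions on $\partial_1\Omega^\varepsilon$) generates a positivity-preserving semigroup on $L^2(\Omega^\varepsilon)$: truncation $\ru\mapsto|\ru|$ does not increase the Dirichlet energy, and $|\ru|\in V_\Delta(\Omega^\varepsilon)$ whenever $\ru\in V_\Delta(\Omega^\varepsilon)$ since the mixed boundary condition only constrains the trace on $\partial_0\Omega^\varepsilon$. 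Hence any first eigenvector $\ru$ may be replaced by $|\ru|\ge0$, which is again a first eigenvector (the Rayleigh quotient is unchanged), and by elliptic regularity together with the strong maximum principle (applicable up to the Neumann part of the boundary by Hopf's lemma, and using that $\Omega^\varepsilon$ is connected), $|\ru|>0$ in the interior. Two orthogonal first eigenvectors cannot both be made nonnegative, so the eigenspace is one-dimensional: the first eigenvalue is simple.

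Next I would use this simplicity to pin down the angular frequency. By Lemma~\ref{lem:Fev} the one-dimensional eigenspace is spanned by a unimodal vector, i.e.\ an eigenvector $\ru$ with $\rG_\theta\ru = e^{ik_0\theta}\ru$ for some $k_0\in\Z$. But the positive eigenvector $\ru_0>0$ constructed above is also a generator of this same line, so $\ru_0$ is itself unimodal with some frequency $k_0$. Applying $\rG_\theta$ and evaluating: since $\rG_\theta$ acts in the scalar case by $(\rG_\theta\ru_0)(\bt)=\ru_0(\cR_\theta\bt)$, we get $\ru_0(\cR_\theta\bt)=e^{ik_0\theta}\ru_0(\bt)$. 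The left side is real and positive, so $e^{ik_0\theta}$ must be real and positive for all $\theta\in\T$, forcing $k_0=0$. Therefore $k_\Delta(\varepsilon)=0$.

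The main obstacle, and the only place requiring a little care rather than a citation, is justifying the strong maximum principle up to the Neumann portion $\partial_1\Omega^\varepsilon$ of the boundary: one must ensure $\partial_1\Omega^\varepsilon$ is regular enough (it is smooth away from the edge $\Phi(\partial\cS\times\{\pm\varepsilon\})$ by the smoothness of $\cS$ and of $\Phi$ in \eqref{1E1}) and that the interior positivity propagates across it via Hopf's boundary-point lemma. An alternative that sidesteps boundary regularity entirely is to argue purely in the interior: $|\ru_0|$ is a nonnegative weak solution of $-\Delta|\ru_0|=\lambda_\Delta(\varepsilon)|\ru_0|$ on the open connected set $\Omega^\varepsilon$, so by the interior Harnack inequality it is either identically zero or strictly positive throughout $\Omega^\varepsilon$; since it is not identically zero it is positive in the interior, which is all the subsequent argument uses. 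I would present this interior version to keep the proof self-contained.
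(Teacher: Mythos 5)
Your proof is correct and follows essentially the same route as the paper: replace a first eigenvector by its modulus, note that it is again a first eigenvector (the paper invokes the Kato equality where you use the truncation/Rayleigh-quotient argument), deduce strict positivity on the connected domain (the paper via the mean value property of the superharmonic function $|\ru|$, you via interior Harnack), and conclude simplicity, then combine with Lemma~\ref{lem:Fev}. The only minor variation is the final step: the paper excludes $k\neq0$ through Remark~\ref{rem:Fev} (a nonzero frequency would produce an independent eigenvector of frequency $-k$, contradicting simplicity), whereas you exclude it directly because a positive unimodal eigenvector cannot satisfy $\rG_\theta\ru=e^{ik\theta}\ru$ unless $k=0$; both arguments are valid and of comparable length.
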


\begin{proof}
The simplicity of the first eigenvalue of the Laplace operator with Dirichlet boundary conditions is a well-known result. Here we reproduce the main steps of the arguments (see e.g.\ \cite[sect.\,7.2]{HelfferBook13}) to check that this result extends to more general boundary conditions.

Let $\ru$ be an eigenvector associated with the first eigenvalue $\lambda$. A consequence of the Kato equality
\[
   \nabla |\ru| = {\rm sgn}\ee(\ru) \nabla\ru \quad\mbox{almost everywhere}\footnote{With the convention that ${\rm sgn}\ee(\ru) = 0$ when $\ru = 0$.}
\]
is that $|\ru|$ satisfies the same boundary conditions as $\ru$ and the same eigenequation $-\Delta|\ru|=\lambda|\ru|$ as $\ru$. Therefore $|\ru|$ is an eigenvector with constant sign. The equation $-\Delta|\ru|=\lambda|\ru|$ implies that $-\Delta|\ru|$ is nonnegative, and hence $|\ru|$ satisfies the mean value property
\[
   |\ru(\bt_0)| \ge \frac{1}{\meas(B(\bt_0,\rho))}\int_{B(\bt_0,\rho)} |\ru(\bt)|\,\rd\bt
\]
for all $\bt_0\in\Omega^\varepsilon$ and all $\rho>0$ such that the ball $B(\bt_0,\rho)$ is contained in $\Omega^\varepsilon$. Hence $|\ru|$ is positive everywhere in $\Omega^\varepsilon$. Therefore $\ru=\pm|\ru|$ and we deduce that the first eigenvalue is simple.

By Lemma \ref{lem:Fev}, this eigenvector is unimodal. Let $k$ be its angular frequency. If $k\neq0$, by Remark \ref{rem:Fev} there would exist an independent eigenvector of angular frequency $-k$ for the same eigenvalue. Therefore $k=0$.
\end{proof}

\subsection{Lam\'e system on plates}
The domain $\Omega^\varepsilon$ is the product \eqref{eq:plate} of $\cS$ by $(-\varepsilon,\varepsilon)$. For the smallest eigenvalues $\lambda_\La(\varepsilon)$ of the Lam\'e problem \eqref{eq:lame}, we have the convergence result of \cite[Th.8.1]{DDFR99}
\begin{equation}
\label{eq:asyL}
   \lambda_\La(\varepsilon) = \lambda_{\B} \,\varepsilon^2
   + \cO(\varepsilon^3),
   \quad\mbox{as}\ \ \varepsilon\to0,
\end{equation}
Here, $\lambda_{\B}$ is the first Dirichlet eigenvalue of the scalar bending operator $\B$ that, in the case of plates, is simply a multiple of the bilaplacian (Kirchhoff model)
\begin{equation}
\label{eq:bilap}
   \B = \frac{1}{3}\frac{E}{1-\nu^2}\,\Delta^2\quad\mbox{on}\quad H^2_0(\cS).
\end{equation}
The reference  \cite[Th.8.2]{DDFR99} proves convergence also for eigenvectors. In particular the normal component $\ru^3$ of a suitably normalized eigenvector converges to a Dirichlet eigenvector of $\B$. This implies that the angular frequency $k_\rL(\varepsilon)$ converges to the angular frequency $k_\B$ of the first eigenvector of the bending operator $\B$.

\subsection{Lam\'e system on a spherical cap}
A spherical cap $\Omega^\varepsilon$ can be easily defined in spherical coordinates $(\rho,\theta,\varphi)\in[0,\infty)\times[-\frac{\pi}{2},\frac{\pi}{2}]\times\T$ (radius, meridian angle, azimuthal angle) as
\[
   \Omega^\varepsilon = \big\{\bt\in\R^3,\quad \rho\in(R-\varepsilon,R+\varepsilon),\quad \varphi\in\T,\quad
   \theta\in(\Theta,\frac{\pi}{2}]\big\}.
\]
Here $R>0$ is the radius of the midsurface $\cS$ and $\Theta\in(0,\pi)$ is a given meridian angle. Numerical experiments conducted in \cite[sect.6.4.2]{DaFaYo} exhibited convergence for the first eigenpair as $\varepsilon\to0$ (when $\Theta=\frac{\pi}{4}$), see Fig.10 {\em loc. cit.}. We do not have (yet) any theoretical proof for this.

\section{Sensitive cases: Developable shells}
\label{s:5}
Developable shells have one main curvature equal to $0$. Excluding plates that are considered above, we see that we are left with cylinders and cones\footnote{Since we consider here shells with a {\em smooth} midsurface, cones should be trimmed so that they do not touch the rotation axis.}.

The case of cylinders was addressed in the literature with different levels of precision. In cylindrical coordinates $(r,\varphi,\tau)\in[0,\infty)\times\T\times\R$ (radius, azimuthal angle, axial abscissa) a thin cylindrical shell is defined as
\[
   \Omega^\varepsilon = \big\{\bt\in\R^3,\quad r\in(R-\varepsilon,R+\varepsilon),\quad \varphi\in\T,\quad
   \tau\in(-\tfrac{L}{2},\tfrac{L}{2})\big\}.
\]
Here $R>0$ is the radius of the midsurface $\cS$ and $L$ its length. The lateral boundary of $\Omega^\varepsilon$ is
\[
   \partial_0\Omega^\varepsilon  = \big\{\bt\in\R^3,\quad r\in(R-\varepsilon,R+\varepsilon),\quad \varphi\in\T,\quad
   \tau = \pm\tfrac{L}{2}\big\}.
\]

One may find in \cite{SoedelBook,SoedelEncyclopedia} an example of analytic calculation for a {\em simply supported} cylinder using a simplified shell model (called Donnel-Mushtari-Vlasov). We note that simply supported conditions on the lateral boundary of a cylinder allow reflection across this lateral boundary, so that separation of the three variables using trigonometric Ansatz functions is possible. This example shows that for $R=1$, $L=2$ and $h=0.02$ (i.e., $\varepsilon=0.01$) the smallest eigenfrequency does not correspond to a {\em simple} eigenmode, i.e., a mode for which $k=0$, but to a mode with $k=4$.

In figure \ref{f:M1} we plot numerical dispersion curves of the {\em exact} Lam\'e model $\bL$ for several values of the thickness $h=2\varepsilon$ ($0.1$, $0.01$, and $0.001$). This means that we discretize the exact 2D Lam\'e model $\bL^{(k)}$ obtained after angular Fourier transformation, see \eqref{eq:Lk}, on the meridian domain
\[
   \omega^\varepsilon = \big\{(r,\tau)\in\R_+\times\R,\quad r\in(R-\varepsilon,R+\varepsilon),\quad
   \tau\in(-\tfrac{L}{2},\tfrac{L}{2})\big\}.
\]
We compute by a finite element method for a collection of values of $k\in\{0,1,\ldots,k_{\max}\}$ so that for each $\varepsilon$, we have reached the minimum in $k$ for the first eigenvalue.

\begin{figure}[ht]
\includegraphics[scale=0.6]{./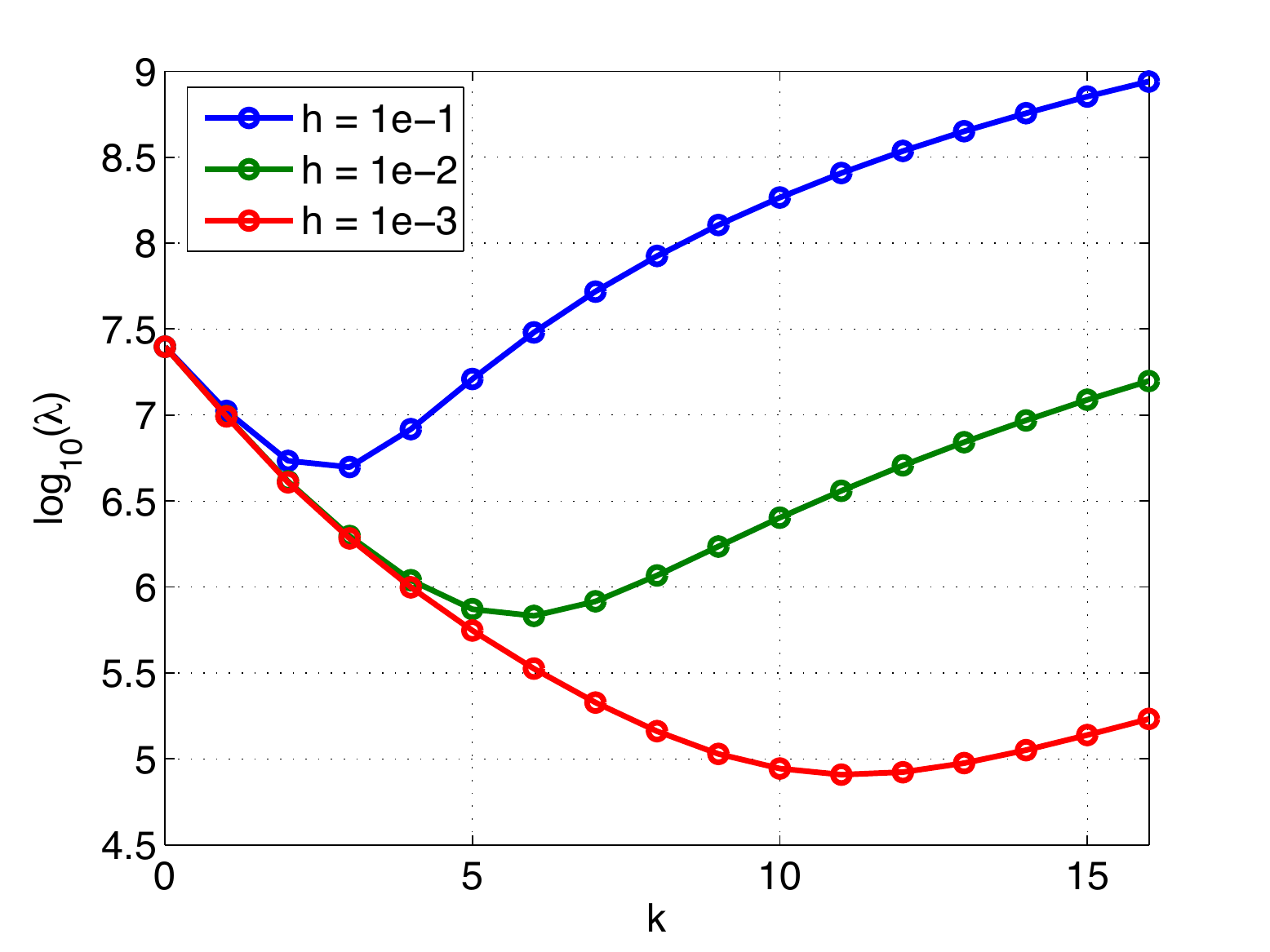}
   \caption{\small Cylinders with $R=1$ and $L=2$: $\log_{10}$ of first eigenvalue of $\bL^{(k)}$ depending on $k$ for several values of the thickness $h=2\varepsilon$. Material constants $E = 2.069 \cdot10^{11}$, $\nu=0.3$, and $\rho=7868$ as in \cite{ArtioliBeiraoHakulaLovadina2009}}
\label{f:M1}\end{figure}

So we see that the minimum is attained for $k=3$, $k=6$, and $k=11$ when $h=0.1$, $0.01$, and $0.001$, respectively. We have also performed direct 3D finite element computations for the same values of the thickness and obtained coherent results. In figures \ref{f:M2}-\ref{f:M4} we represent the shell without deformation and the radial component of the first eigenvector for the three values of the thickness.

\begin{figure}[ht]
\includegraphics[scale=0.25]{./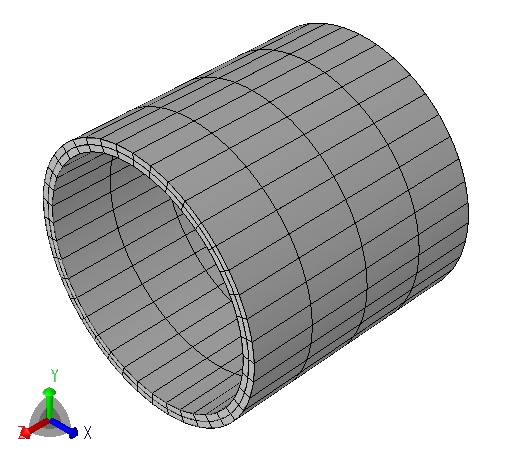}%
\includegraphics[scale=0.25]{./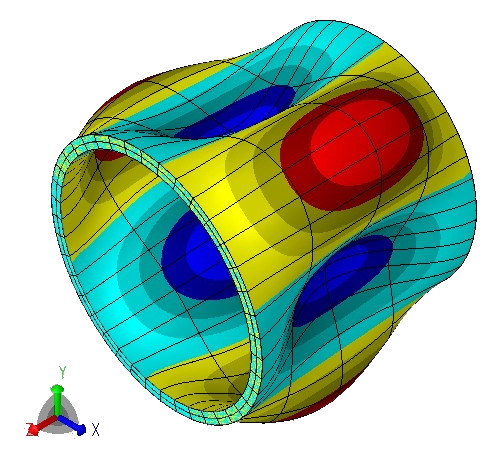}%
   \caption{\small Cylinder with $R=1$, $L=2$ and $h=10^{-1}$: First eigenmode (radial component).}
\label{f:M2}\end{figure}

\begin{figure}[ht]
\includegraphics[scale=0.25]{./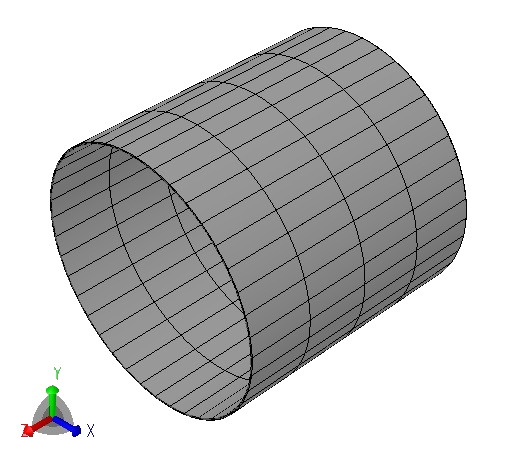}%
\includegraphics[scale=0.25]{./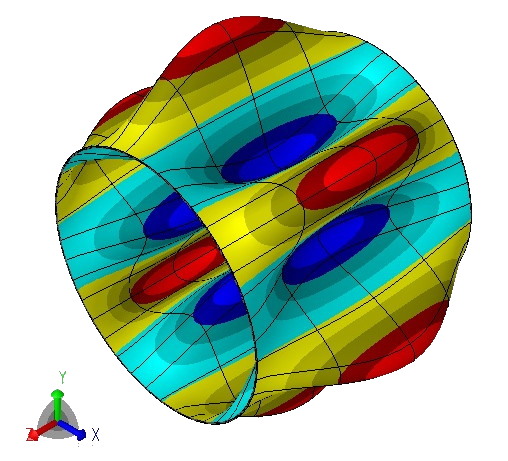}%
   \caption{\small Cylinder with $R=1$, $L=2$ and $h=10^{-2}$: First eigenmode (radial component).}
\label{f:M3}\end{figure}

\begin{figure}[ht]
\includegraphics[scale=0.25]{./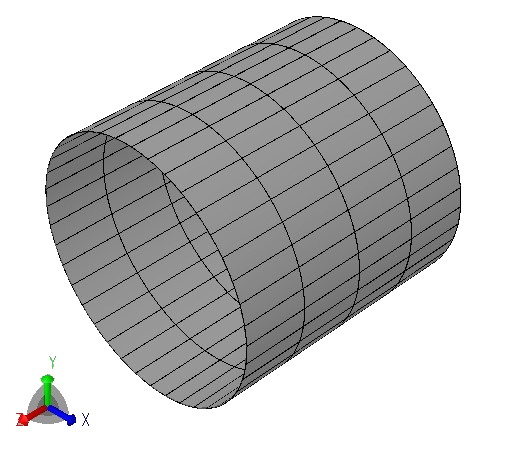}%
\includegraphics[scale=0.25]{./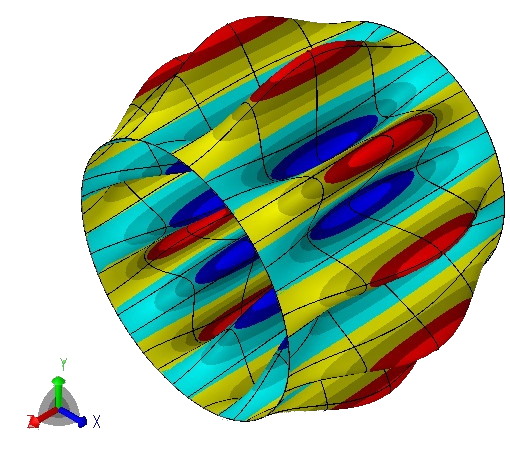}%
   \caption{\small Cylinders with $R=1$, $L=2$ and $h=10^{-3}$: First eigenmode (radial component).}
\label{f:M4}\end{figure}

In fact the first 3D eigenvalue $\lambda_\La(\varepsilon)$ and its associated angular frequency $k_\rL(\varepsilon)$ follow precise power laws that can be determined. A first step in that direction is the series of papers by Artioli, Beir{\~a}o Da Veiga, Hakula and Lovadina \cite{BeiraoLovadina2008,ArtioliBeiraoHakulaLovadina2008,ArtioliBeiraoHakulaLovadina2009}. In these papers the authors investigate the first eigenvalue of classical {\em surface models} posed on the midsurface $\cS$. Such models have the form
\begin{equation}
\label{chopin}
    \bK(\varepsilon) = \bM + \varepsilon^2 \bB.
\end{equation}
The simplest models are $3\times3$ systems. The operator $\bM$ is the membrane operator and $\bB$ the bending operator. These models are obtained using the assumption that normals to the surface in $\Omega^\varepsilon$ are transformed in normals to the deformed surfaces. In the mathematical literature the Koiter model \cite{Koiter5,Koiter2} seems to be the most widely used, while in the mechanical engineering literature so-called Love-type equations will be found \cite{SoedelEncyclopedia}. These models differ from each other by lower order terms in the bending operator $\bB$. As we will specify later on, this difference has no influence in our results.

Defining the order $\alpha$ of a positive function $\varepsilon\mapsto\lambda(\varepsilon)$, continuous on $(0,\varepsilon_0]$, by the conditions
\begin{equation}
\label{order}
   \forall\eta>0,\quad
   \lim_{\varepsilon\to0^+} \lambda(\varepsilon)\,\varepsilon^{-\alpha+\eta} = 0
   \quad\mbox{and}\quad
   \lim_{\varepsilon\to0^+} \lambda(\varepsilon)\,\varepsilon^{-\alpha-\eta} = \infty
\end{equation}
\cite{BeiraoLovadina2008,ArtioliBeiraoHakulaLovadina2008,ArtioliBeiraoHakulaLovadina2009} proved that $\alpha=1$ for the first eigenvalue of $\bK(\varepsilon)$ in clamped cylindrical shells.
They also investigated by numerical simulations the azimuthal frequency $k(\varepsilon)$ of the first eigenvector of $\bK(\varepsilon)$ and identified power laws of type $\varepsilon^{-\beta}$ for $k(\varepsilon)$. They found $\beta=\frac14$ for cylinders (see also \cite{BeiraoHakulaPitkaranta2008} for some theoretical arguments based on special Ansatz functions in the axial direction).

In \cite{ChDaFaYo16a}, we constructed analytic formulas that are able to provide an asymptotic expansion for $k(\varepsilon)$ and $\lambda(\varepsilon)$, and consequently for $k_\La(\varepsilon)$ and $\lambda_\La(\varepsilon)$:
\begin{equation}
\label{eq:asycyl}
   k(\varepsilon) \simeq \gamma\varepsilon^{-1/4}\quad\mbox{and}\quad
   \lambda(\varepsilon) \simeq \am_1\varepsilon\,,
\end{equation}
with explicit expressions of $\gamma$ and $\am_1$ using the material parameters $E$, $\rho$ and $\nu$, the sizes $R$ and $L$ of the cylinder, and the first eigenvalue $\mu^{\sf bilap}\simeq500.564$ of the bilaplacian $\eta\mapsto\partial^4_z\eta$ on the unit interval $(0,1)$ with Dirichlet boundary conditions $\eta(0)=\eta'(0)=\eta(1)=\eta'(1)=0$,
cf \cite[sect.\,5.2.2]{ChDaFaYo16a}:
\begin{equation}
\label{eq:ga1cyl}
   \gamma =  \bigg(\frac{R^6}{L^4}\,3(1-\nu^2)\, \mu^{\sf bilap}\bigg)^{1/8}
   \quad\mbox{and}\quad
   \am_1 = \frac{2E}{\rho RL^2} \sqrt{\frac{ \mu^{\sf bilap}}{3(1-\nu^2)}}\;.
\end{equation}

We compare the asymptotics \eqref{eq:asycyl}-\eqref{eq:ga1cyl} with the computed values of $k_\La(\varepsilon)$ by 2D and 3D FEM discretizations, see figure \ref{f:M5}. The values of $k_\La(\varepsilon)$ are determined for each value of the thickness:
\begin{itemize}
\item In 2D, by the abscissa of the minimum of the dispersion curve (see figure \ref{f:M1})
\item In 3D, by the number of angular oscillations of the first mode (see figures \ref{f:M2}-\ref{f:M4})
\end{itemize}

\begin{figure}[ht]
\includegraphics[scale=0.6]{./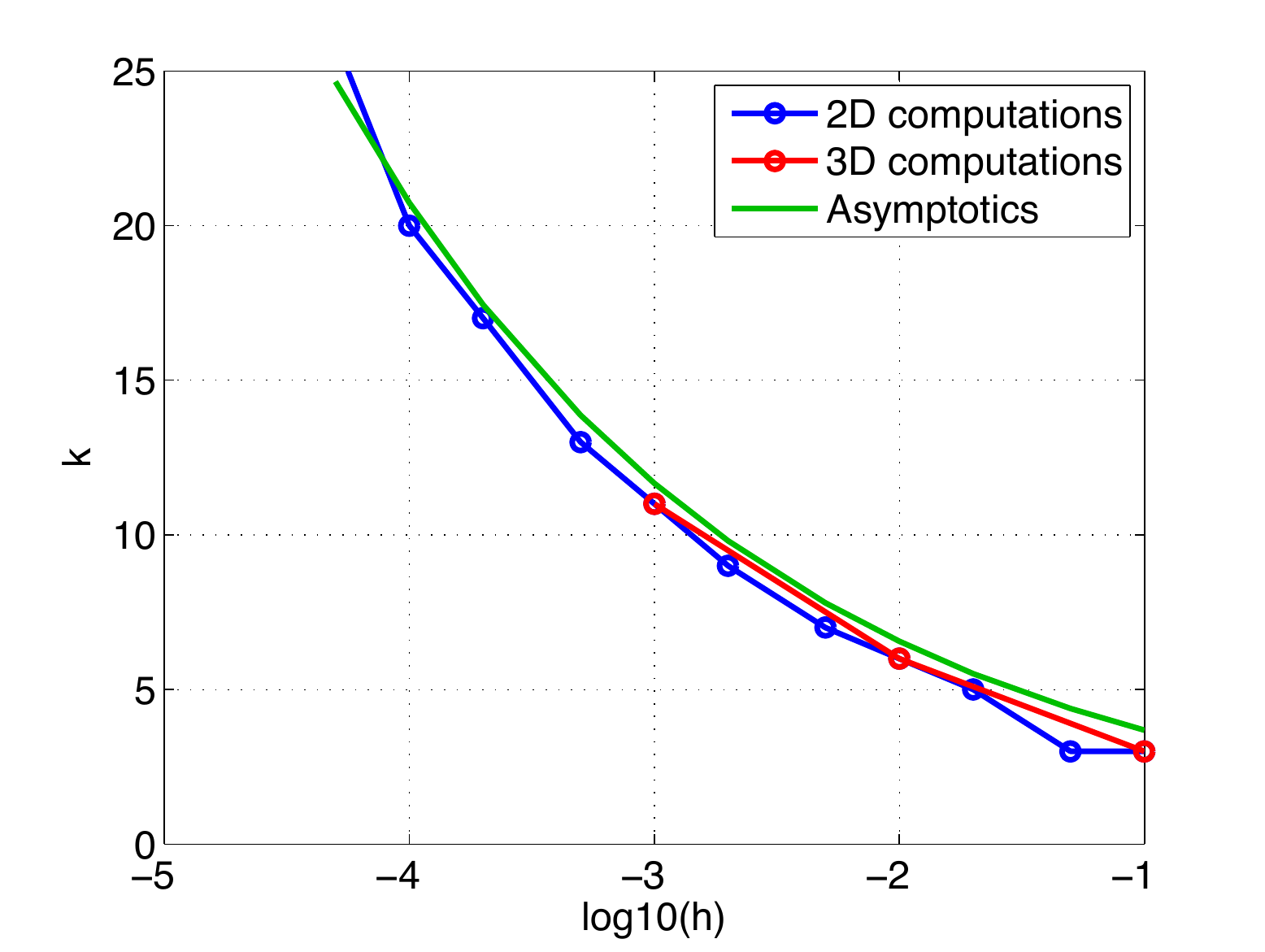}
   \caption{\small Cylinders ($R=1$, $L=2$): Computed values of $k_\La(\varepsilon)$ versus the thickness $h=2\varepsilon$. The asymptotics is $h\mapsto 9.2417\cdot\varepsilon^{-1/4}\simeq 11\cdot h^{-1/4}$ (with $\nu=0.3$).}
\label{f:M5}\end{figure}

Finally we compare the asymptotics \eqref{eq:asycyl}-\eqref{eq:ga1cyl} with the computed eigenvalues $\lambda_\rL(\varepsilon)$ by four different methods, see figure \ref{f:M6}.

\begin{figure}[ht]
\includegraphics[scale=0.6]{./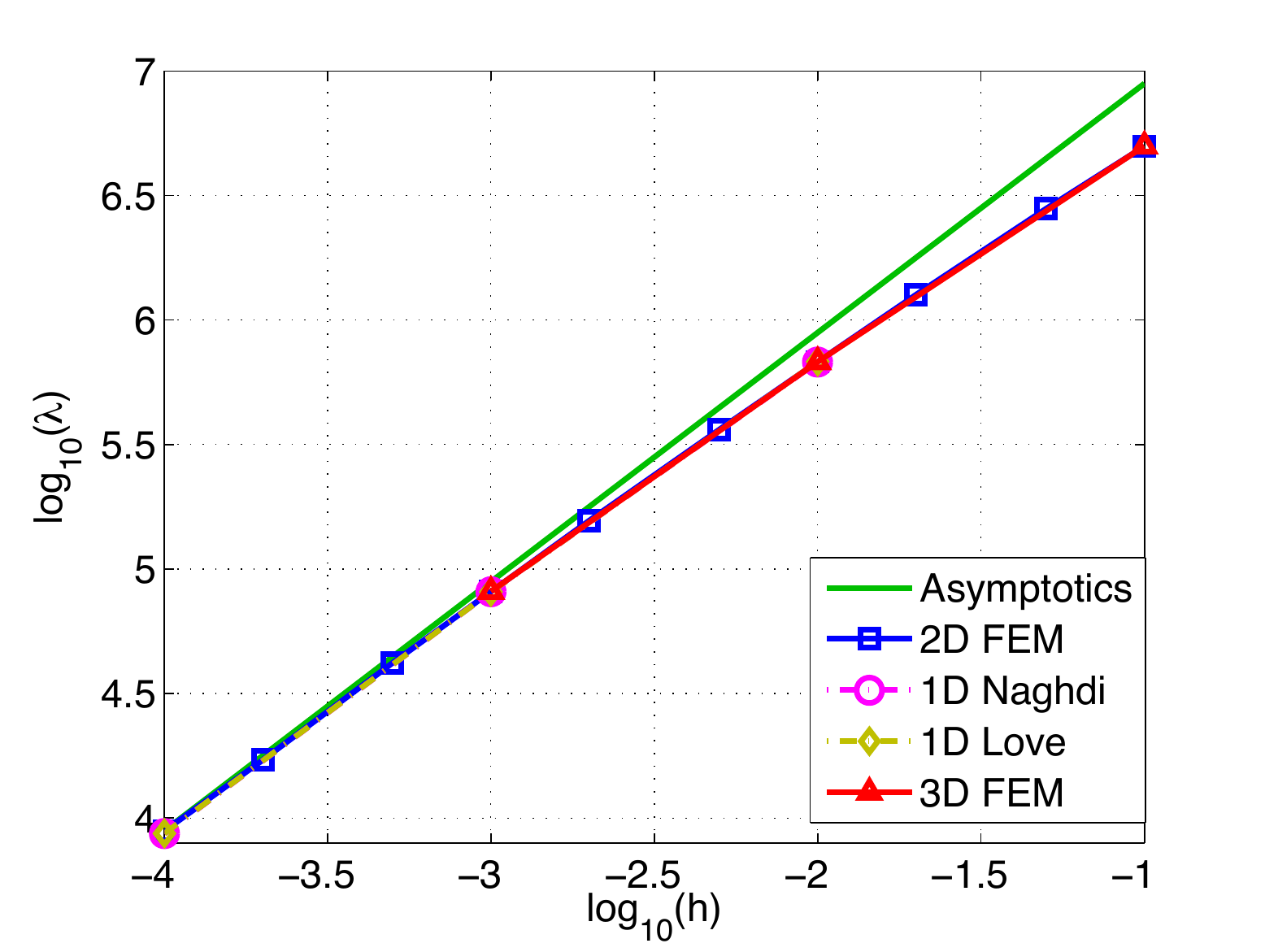}
   \caption{\small Cylinders ($R=1$, $L=2$): Computed values of $\lambda_\rL(\varepsilon)$ versus the thickness $h=2\varepsilon$. Material constants $E = 2.069 \cdot10^{11}$, $\nu=0.3$, and $\rho=7868$. 1D Naghdi and Love models are computed in \cite{ArtioliBeiraoHakulaLovadina2009}. The asymptotics is $h\mapsto 6.770\cdot\varepsilon\,E/\rho= 3.385\cdot h\,E/\rho$.}
\label{f:M6}\end{figure}

Problems considered in figure \ref{f:M6}:
\begin{itemize}
\item[a)] Lam\'e system $\bL^{(k)}$ on the meridian domain $\omega^\varepsilon$, computed for a collection of values of $k$ by 2D finite element method.
\item[b)] Naghdi model \cite{Naghdi1} on the meridian set $\cC=(-1,1)$ of the midsurface,  computed for a collection of values of $k$ by 1D finite element method in \cite{ArtioliBeiraoHakulaLovadina2009}.
\item[c)] Love-type model \cite{SoedelEncyclopedia} on $\cC$, computed for a collection of values of $k$ by collocation in \cite{ArtioliBeiraoHakulaLovadina2009}.
\item[d)] 3D finite element method on the full domain $\Omega^\varepsilon$.
\end{itemize}

In methods a), b) and c),
\[
   \lambda(\varepsilon) = \min_{0\le k\le k_{\max}} \lambda^{(k)}(\varepsilon)
\]
where $\lambda^{(k)}(\varepsilon)$ is the first eigenvalue of the problem with angular Fourier parameter $k$ (remind that $ \lambda^{(k)}(\varepsilon)= \lambda^{(-k)}(\varepsilon)$). In method d), $\lambda(\varepsilon)$ is the first eigenvalue.

We can observe that these four methods yield very similar results and that the agreement with the asymptotics is quite good.
In \cite[sect.\,5]{ChDaFaYo16a} the case of trimmed cones is handled in a similar way and yields goods results, too.

\section{A sensitive family of elliptic shells, the Airy barrels}
\label{s:6}

In this section we consider a family of shells defined by a parametrization with respect to the axial coordinate, which is denoted by $z$ when it plays the role of a parametric variable: The meridian curve $\cC$ of the surface $\cS$ is defined in the half-plane $\R_+\times\R$ by
\[
   \cC = \big\{(r,z)\in\R_+\times\R,\quad z\in\cI,\ r=f(z)\big\}
\]
where $\cI$ is a chosen bounded interval and $f$ is a smooth function on the closure of $\cI$. We assume that $f$ is positive on $\overline\cI$. Then the midsurface is parametrized as (with values in Cartesian variables)
\begin{equation}
\label{eq:surfpara}
\begin{array}{ccccc}
    & \cI\times\T &\longrightarrow &\cS \\
    & (z,\varphi)  &\longmapsto& (t_1,t_2,t_3) = (f(z)\cos\varphi, \,f(z)\sin\varphi,\,z).
\end{array}
\end{equation}
Finally, the transformation $\F:(z,\varphi,x_3)\mapsto(t_1,t_2,t_3)$ sends the product $\cI\times\T\times(-\varepsilon,\varepsilon)$ onto the shell $\Omega^\varepsilon$ and is explicitly given by
\begin{equation}
\label{eq:para}
   t_1 = \big(f(z) + x_3\ \tfrac{1}{s(z)} \big) \cos\varphi,\quad
   t_2 = \big(f(z) + x_3\ \tfrac{1}{s(z)} \big) \sin\varphi,\quad
   t_3 = z - x_3 \ \tfrac{f'(z)}{s(z)},
\end{equation}
where $s$ is the curvilinear abscissa
\[
   s(z) = \sqrt{1+f'^2(z)} .
\]
With shells parametrized in such a way, we are in the elliptic case (that means a positive Gaussian curvature) if and only if $f''$ is negative on $\overline\cI$. In this same situation the references \cite{BeiraoLovadina2008,ArtioliBeiraoHakulaLovadina2008,ArtioliBeiraoHakulaLovadina2009} proved that the order \eqref{order} of the first eigenvalue of $\bK(\varepsilon)$ is $\alpha=0$. In \cite{ArtioliBeiraoHakulaLovadina2009}, numerical simulations are presented for the case
\begin{equation}
\label{eq:ell}
   f(z) = 1 - \frac{z^2}{2} \quad\mbox{on}\quad
   \cI = (-0.892668,0.892668),
\end{equation}
by solving the Naghdi and the Love models. A power law $k(\varepsilon)\sim \varepsilon^{-2/5}$ is suggested for the angular frequency of the first mode. The shells defined by \eqref{eq:para}-\eqref{eq:ell} have the shape of barrels, figure \ref{f:N2}.

\begin{figure}[ht]
\includegraphics[scale=0.2]{./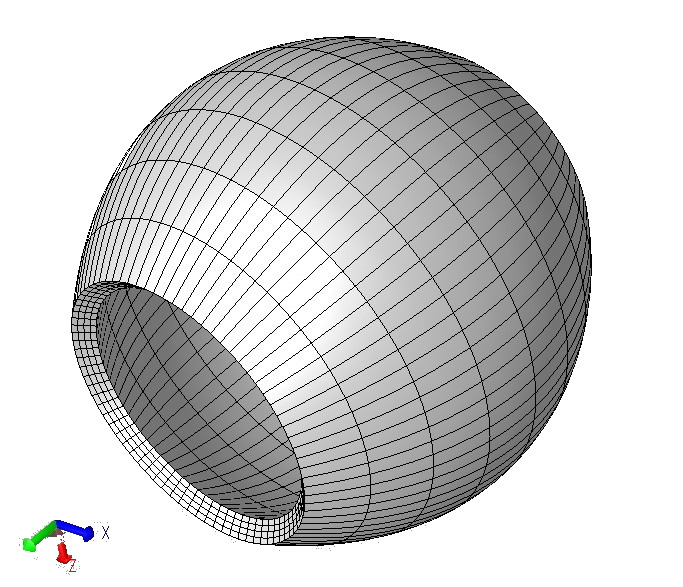}%
   \caption{\small Shell \eqref{eq:para}-\eqref{eq:ell} with $h=10^{-1}$.}
\label{f:N2}\end{figure}

Before presenting the analytical formulas of the asymptotics \cite{ChDaFaYo16a}, let us show results of our 2D and 3D FEM computations. In figure \ref{f:N1} we plot numerical dispersion curves of the exact Lam\'e model $\bL$ for several values of the thickness $h=2\varepsilon$ ($0.01$, $0.004$, $0.002$ and $0.001$).

\begin{figure}[ht]
\includegraphics[scale=0.6]{./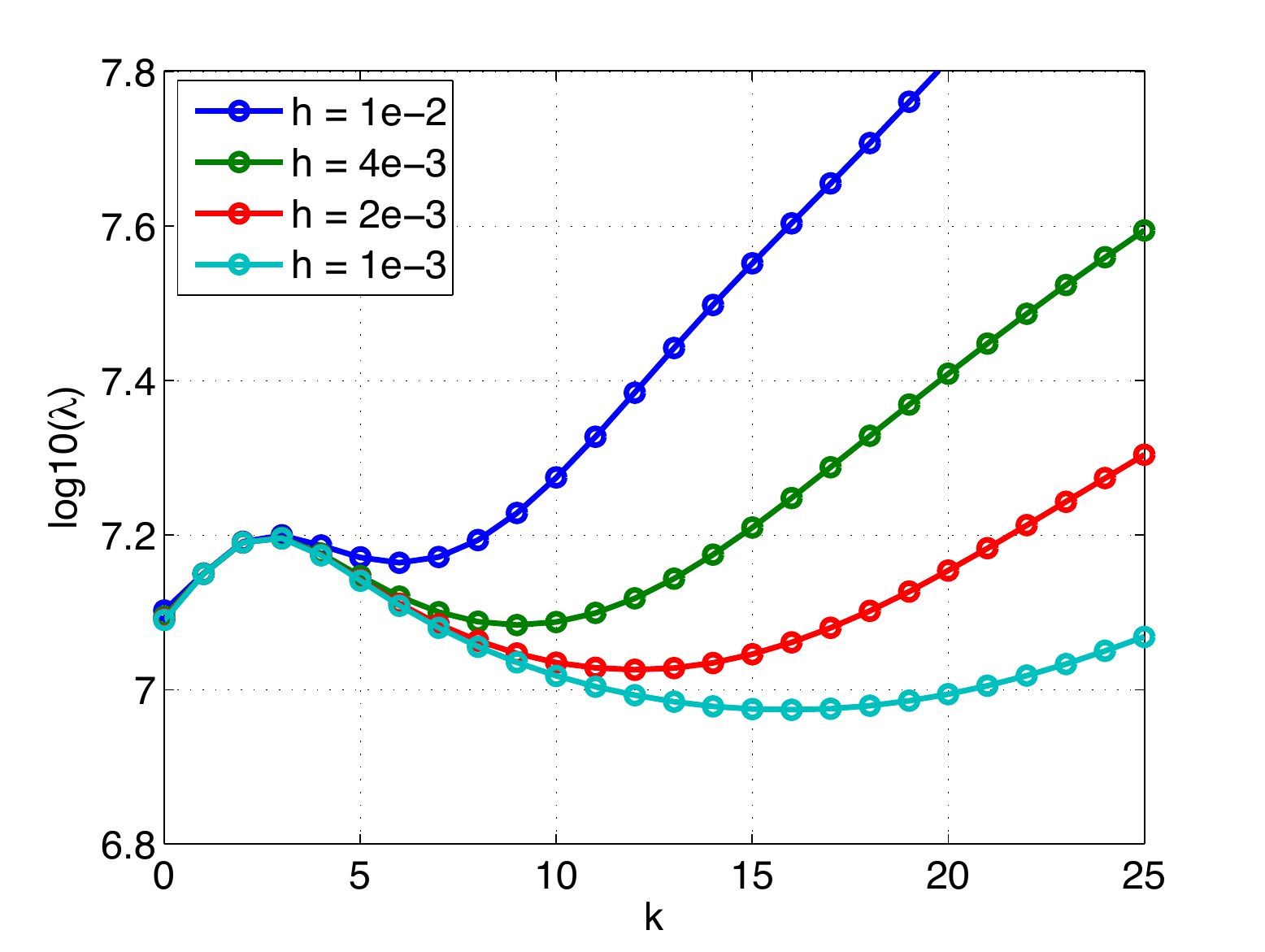}
   \caption{\small Shell \eqref{eq:para}-\eqref{eq:ell}: $\log_{10}$ of first eigenvalue of $\bL^{(k)}$ depending on $k$ for several values of the thickness $h=2\varepsilon$. Material constants as in \cite{ArtioliBeiraoHakulaLovadina2009}}
\label{f:N1}\end{figure}
We can see that, in contrast with the cylinders, $k=0$ is a local minimum of all dispersion curves. This minimum is global when $h\ge0.005$. A second local minimum shows up, which becomes the global minimum when $h\le0.004$ ($k=9$, $12$ and $16$ for $h=0.004$, $0.002$, and $0.001$, respectively. In figure \ref{f:N3} we represent the radial component of the first eigenvector for these four values of the thickness obtained by direct 3D FEM.

\begin{figure}[ht]
\includegraphics[scale=0.2]{./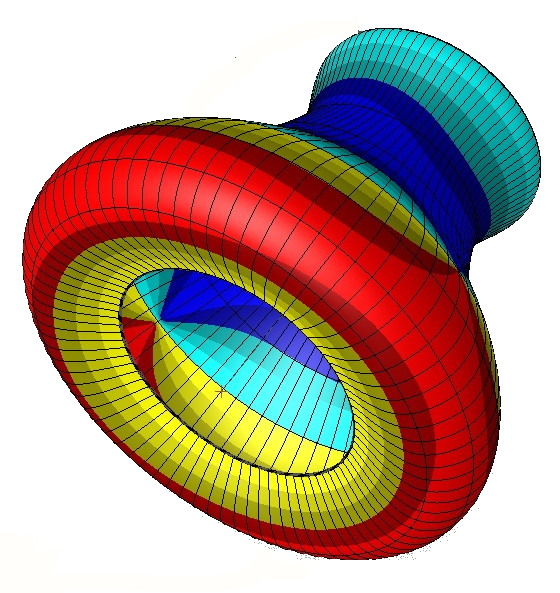}
\includegraphics[scale=0.21]{./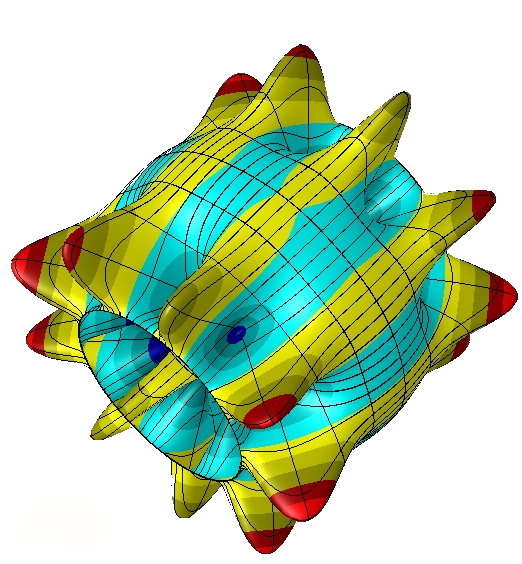}
\includegraphics[scale=0.168]{./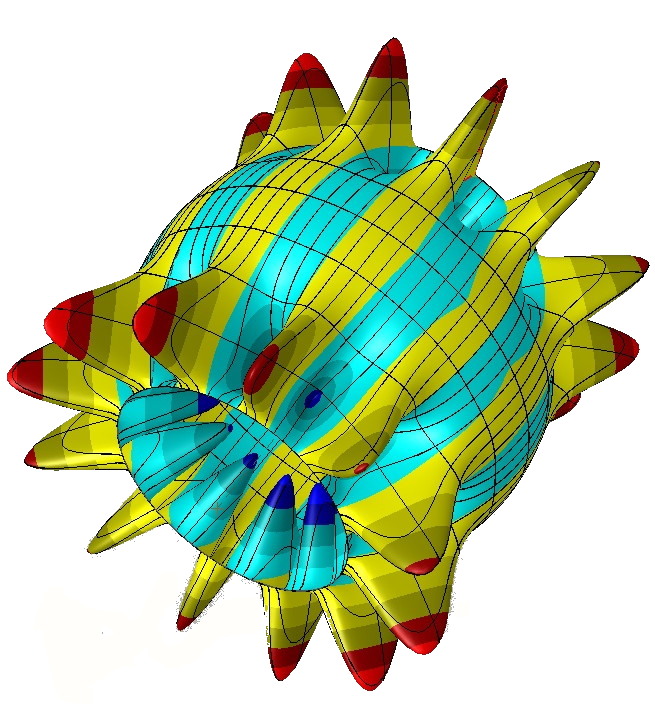}
\includegraphics[scale=0.224]{./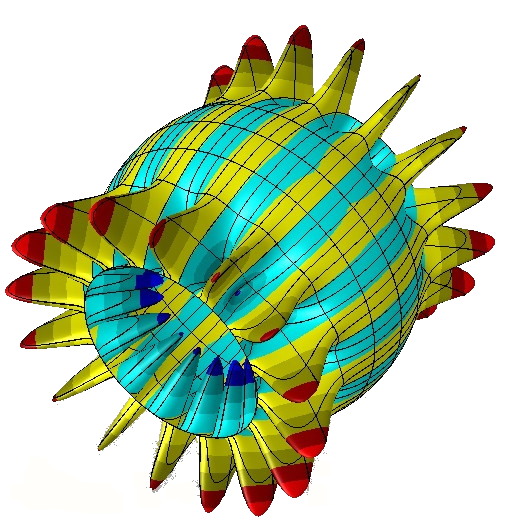}
   \caption{\small Shell \eqref{eq:para}-\eqref{eq:ell} and $h=0.01,\;0.004,\;0.002,\;0.001$: First eigenmode (radial component).}
\label{f:N3}\end{figure}

Comparing with the cylindrical case, we observe a new phenomenon: the eigenmodes also concentrate in the meridian direction, close to the ends of the barrel, displaying a boundary layer structure as $\varepsilon\to0$. In \cite{ChDaFaYo16a} we have classified elliptic shells according to behavior of the function (proportional to the square of the meridian curvature $b^z_z$)
\begin{equation}
\label{eq:H0}
   \rH_0 = \frac{E}{\rho}\, \frac{f''^2}{(1+f'^2)^3}\,.
\end{equation}
If $\rH_0$ is not constant, the classification depends on the localization of the minimum of $\rH_0$. If the minimum is attained in a point $z_0$ that is at one end of $\cI$, we are in what we called the {\em Airy case}. We observe that for the function $f=1-\frac{z^2}{2}$
\[
   \rH_0 = \frac{E}{\rho}\, \frac{1}{(1+z^2)^3} \,.
\]
Its minimum is clearly attained at the two ends $\pm z_0$ of the symmetric interval $\cI$. In the Airy case our asymptotic formulas take the form, see \cite[sect.\,6.4]{ChDaFaYo16a},
\begin{equation}
\label{eq:asyell}
   k(\varepsilon) \simeq \gamma\varepsilon^{-3/7}\quad\mbox{and}\quad
   \lambda(\varepsilon) \simeq \am_0 + \am_1\varepsilon^{2/7} \quad\mbox{with}\quad \am_0=\rH_0(\pm z_0)\,.
\end{equation}
To give the values of $\gamma$ and $\am_1$ we need to introduce the functions
\begin{equation}
\label{eq:B0g}
   \rh(z) = -\frac{2E}{\rho}\Big(\frac{ff''}{s^6}+\frac{f^2f''^2}{s^8}\Big)(z)
   \quad\mbox{and}\quad
   \rB_0(z) =\frac{E}{\rho} \,\frac{1}{3(1-\nu^2)} \,\frac{1}{f(z)^4}\,.
\end{equation}
With
\begin{equation}
\label{eq:bc}
   \rb=\rB_0(z_0) \quad\mbox{and}\quad
   \rc=\mathsf{z}_{\mathsf Airy}^{(1)} \, \big( \rh(z_0)\big)^{1/3}\,
   \big(\partial_z\rH_0(z_0)\big)^{2/3},
\end{equation}
(here $\mathsf{z}_{\mathsf Airy}^{(1)}\simeq 2.33810741$ is the first zero of the reverse Airy function) we have
\begin{equation}
\label{eq:ga1ell}
   \gamma = \Big(\frac{\rc}{6\rb}\Big)^{3/14}\quad\mbox{and}\quad
   \am_1 = (6\rb \rc^{6})^{1/7} (1 + \frac16) \,.
\end{equation}
We compare the asymptotics \eqref{eq:asyell}-\eqref{eq:ga1ell} with the computed values of $k_\La(\varepsilon)$ by 2D and 3D FEM discretizations, see figure \ref{f:N5}. The values of $k_\La(\varepsilon)$ are determined for each value of the thickness by the same numerical methods as in the cylindric case.

\begin{figure}[ht]
\includegraphics[scale=0.6]{./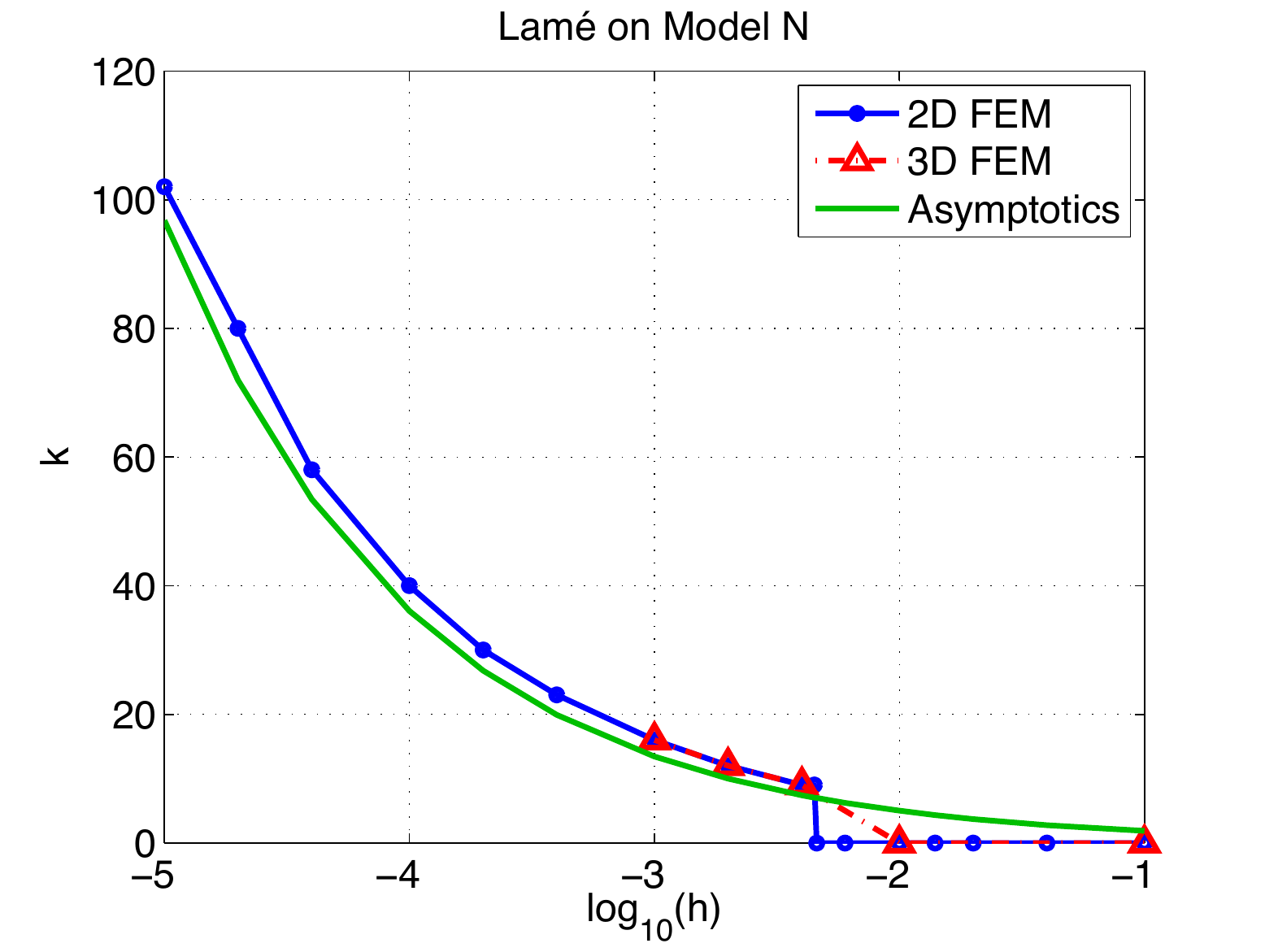}
   \caption{\small Shell \eqref{eq:para}-\eqref{eq:ell}: Computed values of $k_\La(\varepsilon)$ versus the thickness $h=2\varepsilon$. The asymptotics is $h\mapsto 0.51738\cdot\varepsilon^{-3/7}\simeq 0.6963\cdot h^{-3/7}$ (with $\nu=0.3$).}
\label{f:N5}\end{figure}

Finally we compare the asymptotics \eqref{eq:asyell}-\eqref{eq:ga1ell} with the computed eigenvalues $\lambda_\rL(\varepsilon)$ by the same four different methods as in the cylinder case, see figure \ref{f:N6}.

\begin{figure}[ht]
\includegraphics[scale=0.6]{./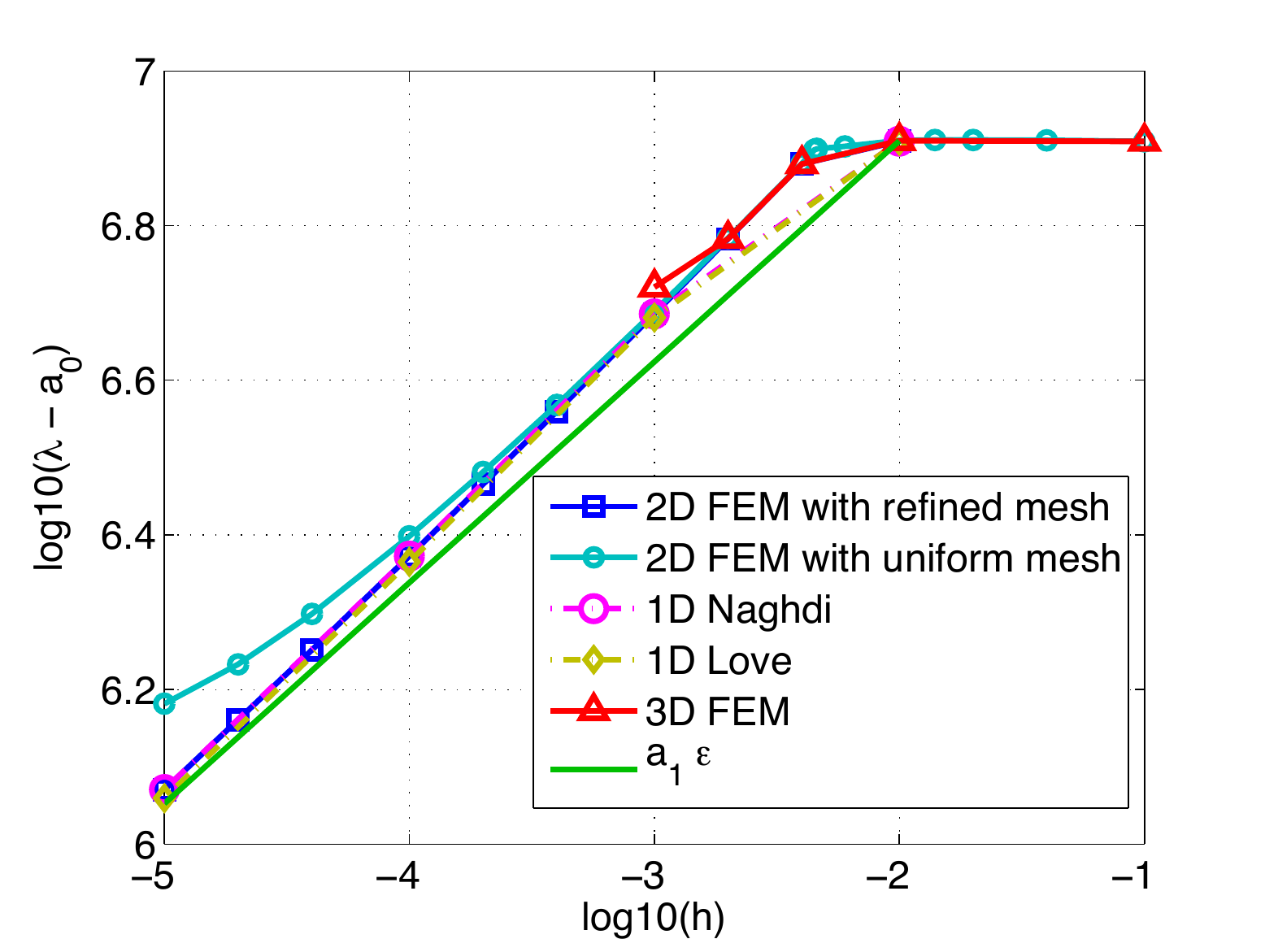}
   \caption{\small Shell \eqref{eq:para}-\eqref{eq:ell}: Computed values of $\lambda_\rL(\varepsilon)-\am_0$ versus  thickness $h=2\varepsilon$. 1D Naghdi and Love models in \cite{ArtioliBeiraoHakulaLovadina2009}. Asymptotics $h\mapsto \am_0+\am_1\,\varepsilon \simeq (0.1724+1.403\cdot \varepsilon)\,E/\rho$.}
\label{f:N6}\end{figure}

Here we present 2D computations with two different meshes. The uniform mesh has $2\times8$ curved elements of geometrical degree 3 (2 in the thickness direction, 8 in the meridian direction) and the interpolation degree is equal to 6. In the refined mesh, we add 8 points in the meridian direction, at distance $\cO(\varepsilon)$, $\cO(\varepsilon^{3/4})$, $\cO(\varepsilon^{1/2})$, and $\cO(\varepsilon^{1/4})$ from each lateral boundary, see figure \ref{f:N7}. So the mesh has the size $2\times16$. The geometrical degree is still 3 and the interpolation degree, 6. In this way we are able to capture these eigenmodes that concentrate at the scale $d/\varepsilon^{2/7}$, where $d$ is the distance to the lateral boundaries. In fact eigenmodes also contain terms at higher scales, namely $d/\varepsilon^{3/7}$ (membrane boundary layers), $d/\varepsilon^{1/2}$ (Koiter boundary layers), and $d/\varepsilon$ (3D plate boundary layers).
\begin{figure}[ht]
\includegraphics[scale=0.25]{./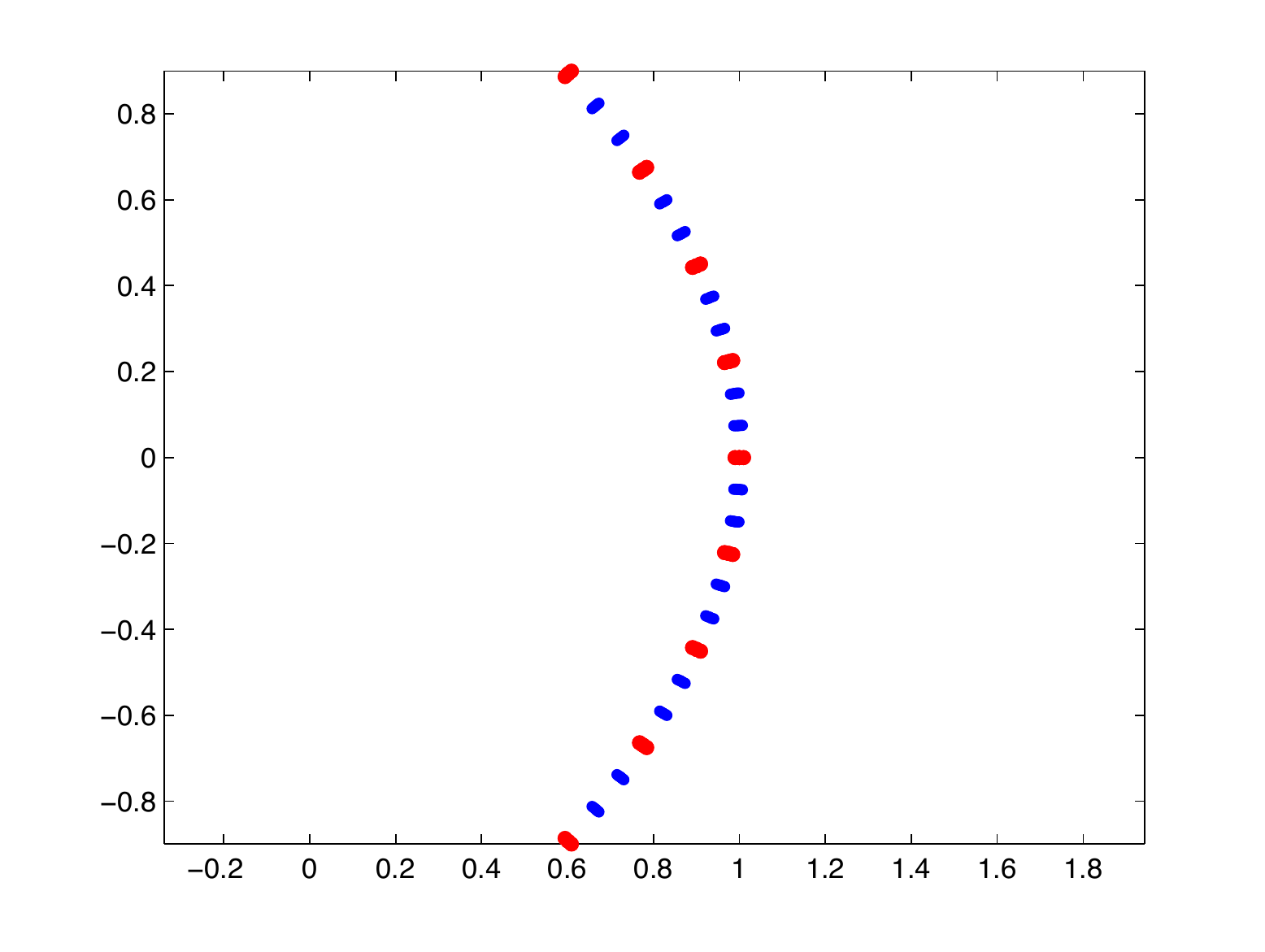}
\includegraphics[scale=0.25]{./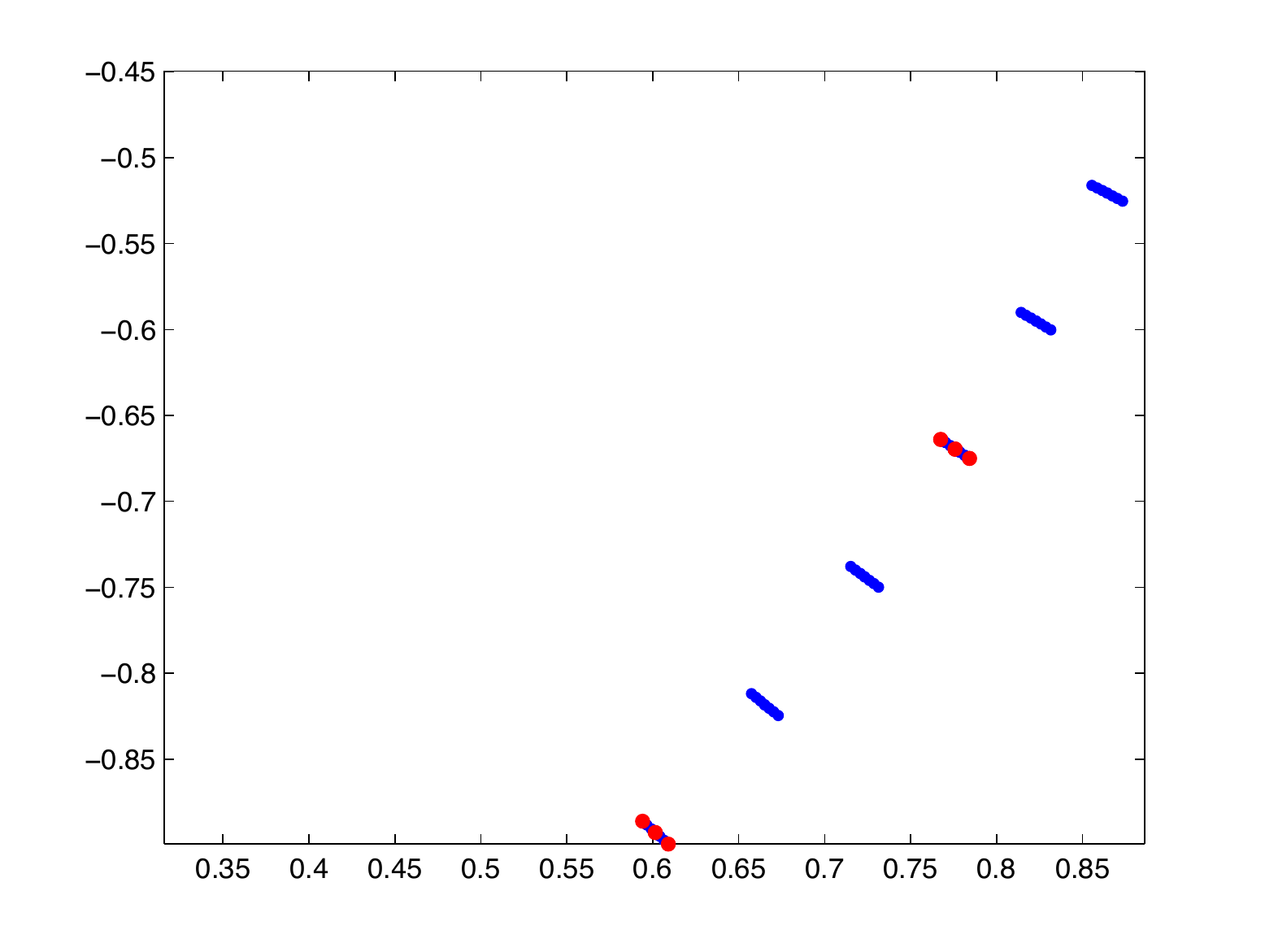}
\includegraphics[scale=0.25]{./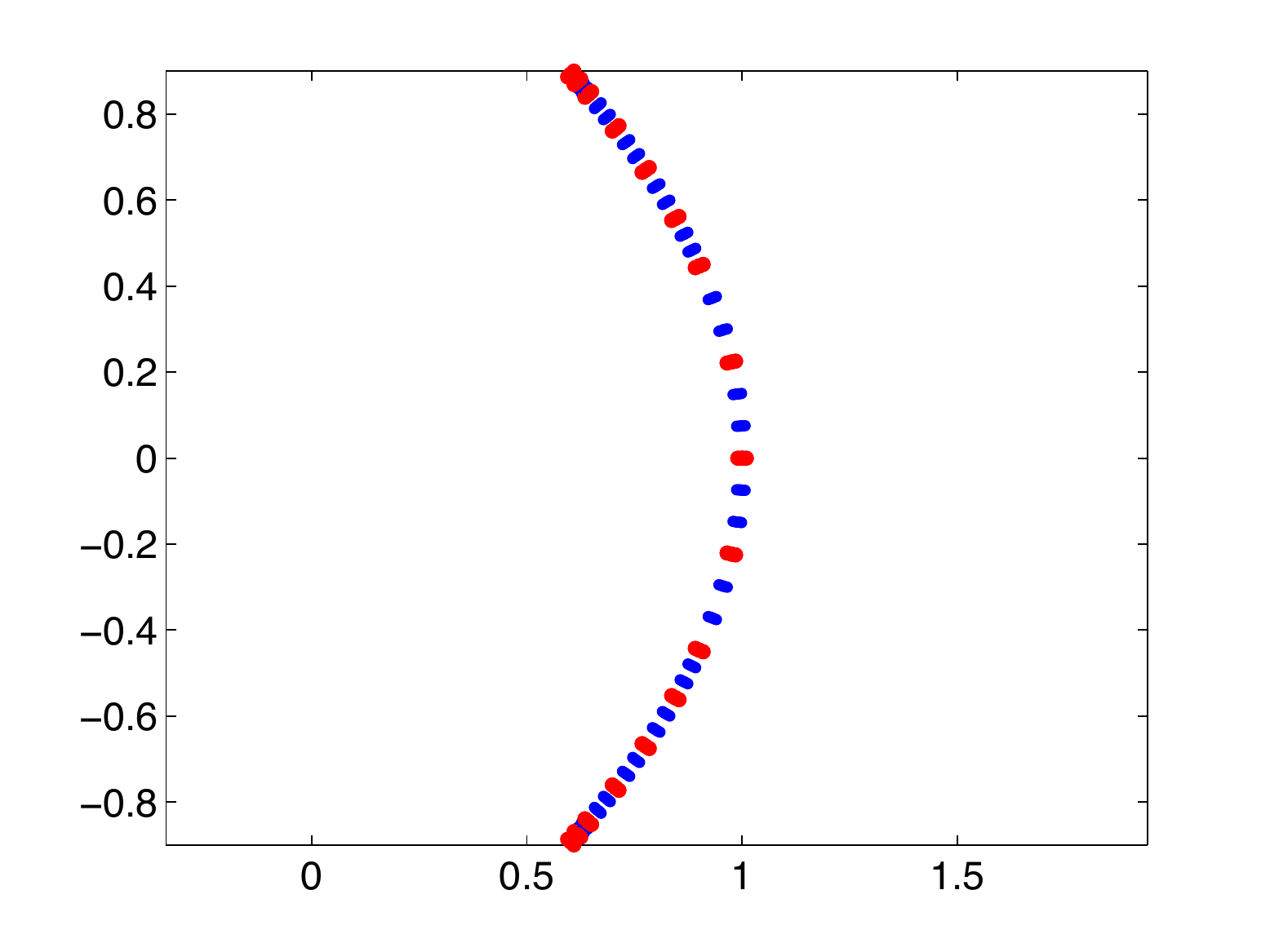}
\includegraphics[scale=0.25]{./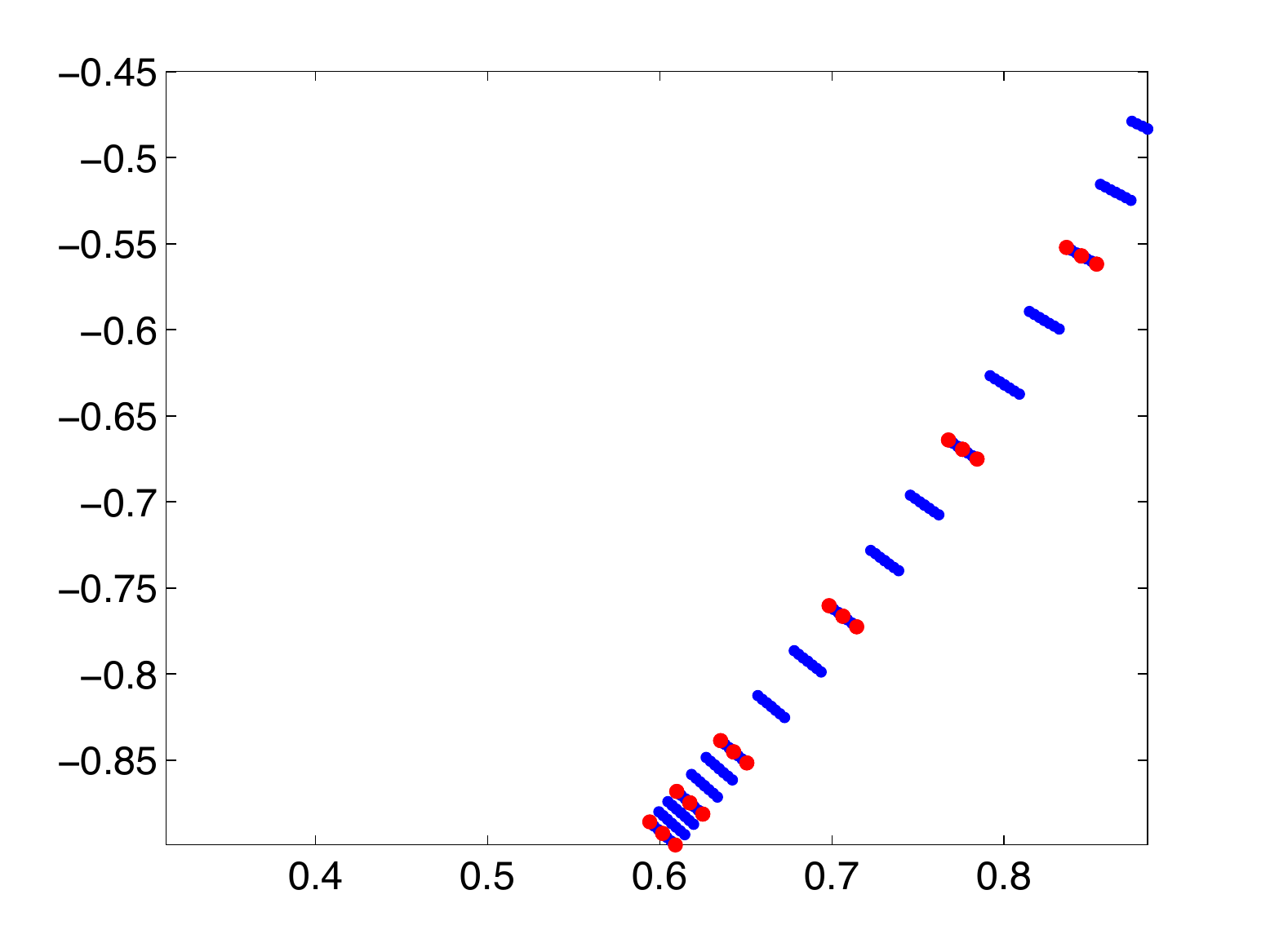}
   \caption{\small Meshes for shell \eqref{eq:para}-\eqref{eq:ell} and $\varepsilon=0.01$: Uniform (left), refined (right).}
\label{f:N7}\end{figure}

A further, more precise comparison of the five families of computations with the asymptotics is shown in figure \ref{f:N8} where the ordinates represent now $\log_{10}(\lambda-\am_0-\am_1\varepsilon^{2/7})$. These numerical results suggest that there is a further term in the asymptotics of the form $\am_2\varepsilon^{4/7}$. We observe a perfect match between the 1D Naghdi model and the 2D Lam\'e model using refined mesh. The Love-type model seems to be closer to the asymptotics. A reason could be the very construction of the asymptotics: They are built from a Koiter model from which we keep
\begin{itemize}
\item the membrane operator $\bM$,
\item the only term in $\partial^4_\varphi$ in the bending operator. Note that this term is common to the Love and Koiter models.  After angular Fourier transformation, the corresponding operator becomes $\rB_0(z)\,k^4$, with $\rB_0$ introduced in \eqref{eq:B0g}.
\end{itemize}

\begin{figure}[ht]
\includegraphics[scale=0.6]{./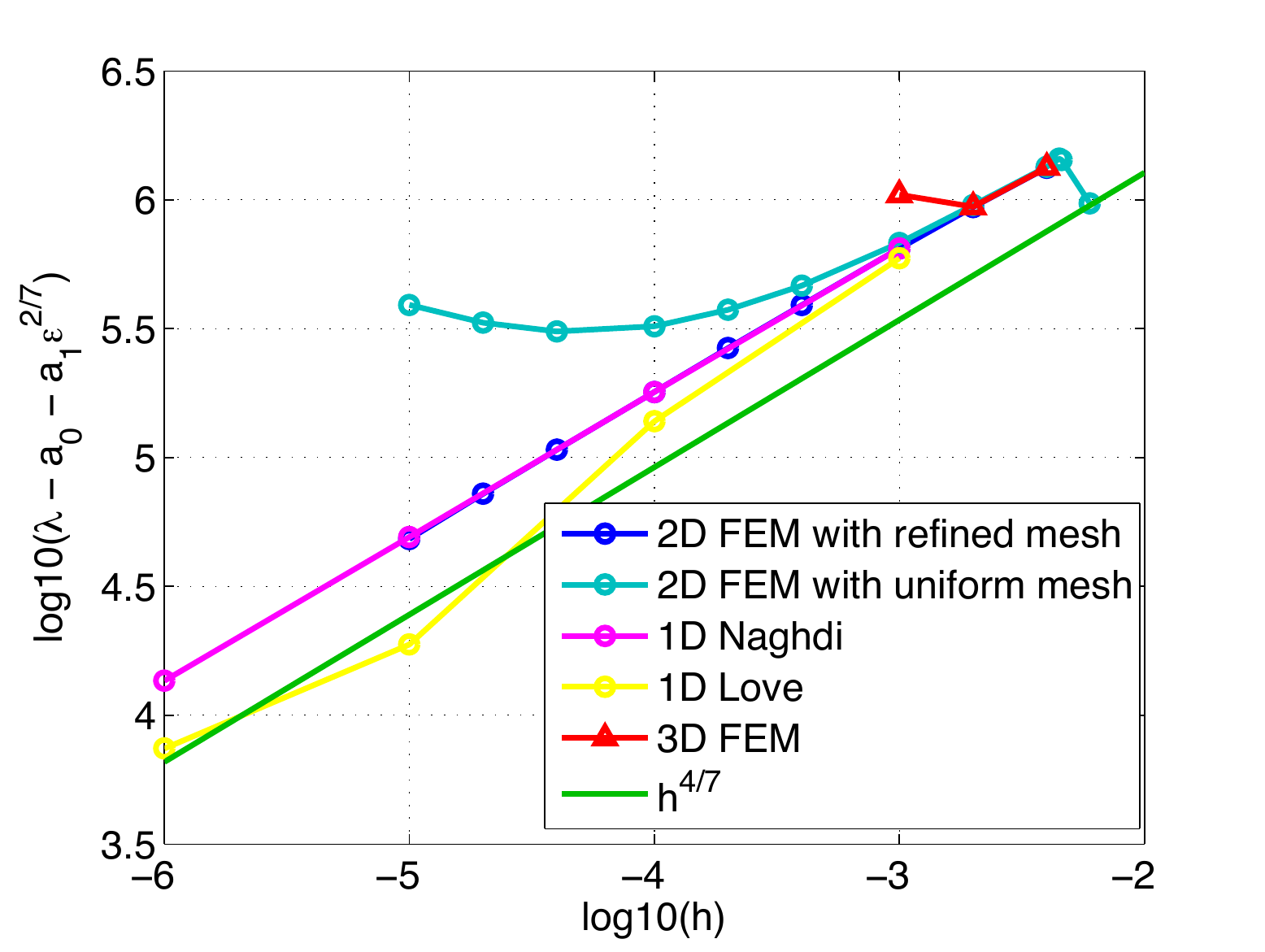}
   \caption{\small Shell \eqref{eq:para}-\eqref{eq:ell}: Computed values of $\lambda_\rL(\varepsilon)-\am_0-\am_1\varepsilon^{2/7}$ versus  thickness $h=2\varepsilon$. 1D Naghdi and Love models in \cite{ArtioliBeiraoHakulaLovadina2009}. The reference line is $h\mapsto \varepsilon^{4/7}\,E/\rho$.}
\label{f:N8}\end{figure}

The exponent $-\frac{3}{7}$ in \eqref{eq:asyell} is an exact fraction arising from an asymptotic analysis where the Airy equation $-\partial^2_ZU+ZU = \Lambda U$ on $\R_+$ with $U(0)=0$ shows up. Thus, the exponent $-\frac{2}{5}$
in \cite{ArtioliBeiraoHakulaLovadina2009} that is only an educated guess is probably incorrect.
We found in \cite[sect.\,6.3]{ChDaFaYo16a} this $-\frac{2}{5}$  exponent for another class of elliptic shells that we called {\em Gaussian barrels}, for which the function $\rH_0$ \eqref{eq:H0} attains its minimum inside the interval $\cI$ (instead of on the boundary for Airy barrels).

\section{Conclusion: The leading role of the membrane operator for the Lam\`e system}
\label{s:7}
We presented two families of shells for which the first eigenmode has progressively more oscillations as the thickness tends to $0$. The question is ``Can we predict such a behavior for other families of shells? What are the determining properties?''

In \cite{ChDaFaYo16a} we presented several more families of shells with same characteristics of the first mode. The common feature that controls such a behavior seems to be strongly associated to the membrane operator $\bM$. If we superpose to our dispersion curves $k\mapsto\lambda^{(k)}(\varepsilon)$ of the Lam\'e system the dispersion curves $k\mapsto\mu^{(k)}$ of the membrane operator, we observe convergence to the membrane eigenvalues as $\varepsilon\to0$ for each chosen value of $k$, see figure \ref{f:memb}. We also observe that for each chosen $\varepsilon$, the sequence $\lambda^{(k)}(\varepsilon)$ tends to $\infty$ as $k\to\infty$. The appearance of a global minimum of $\lambda^{(k)}(\varepsilon)$ for $k$ that tends to $\infty$ as $\varepsilon\to0$ occurs if the sequence $\mu^{(k)}$ has no global minimum: Its infimum is attained ``at infinity''.

\begin{figure}[ht]
\includegraphics[scale=0.5]{./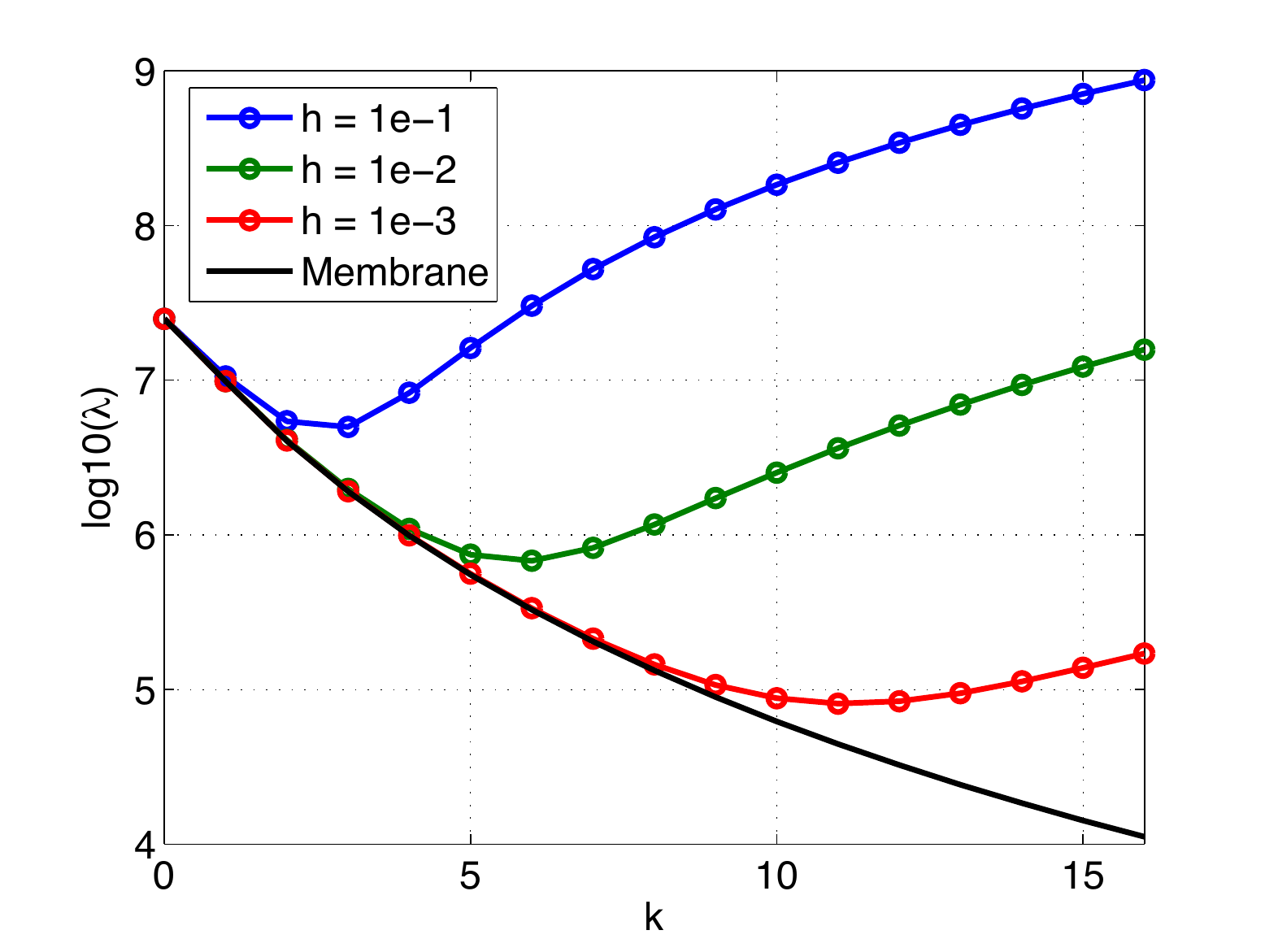}
\includegraphics[scale=0.5]{./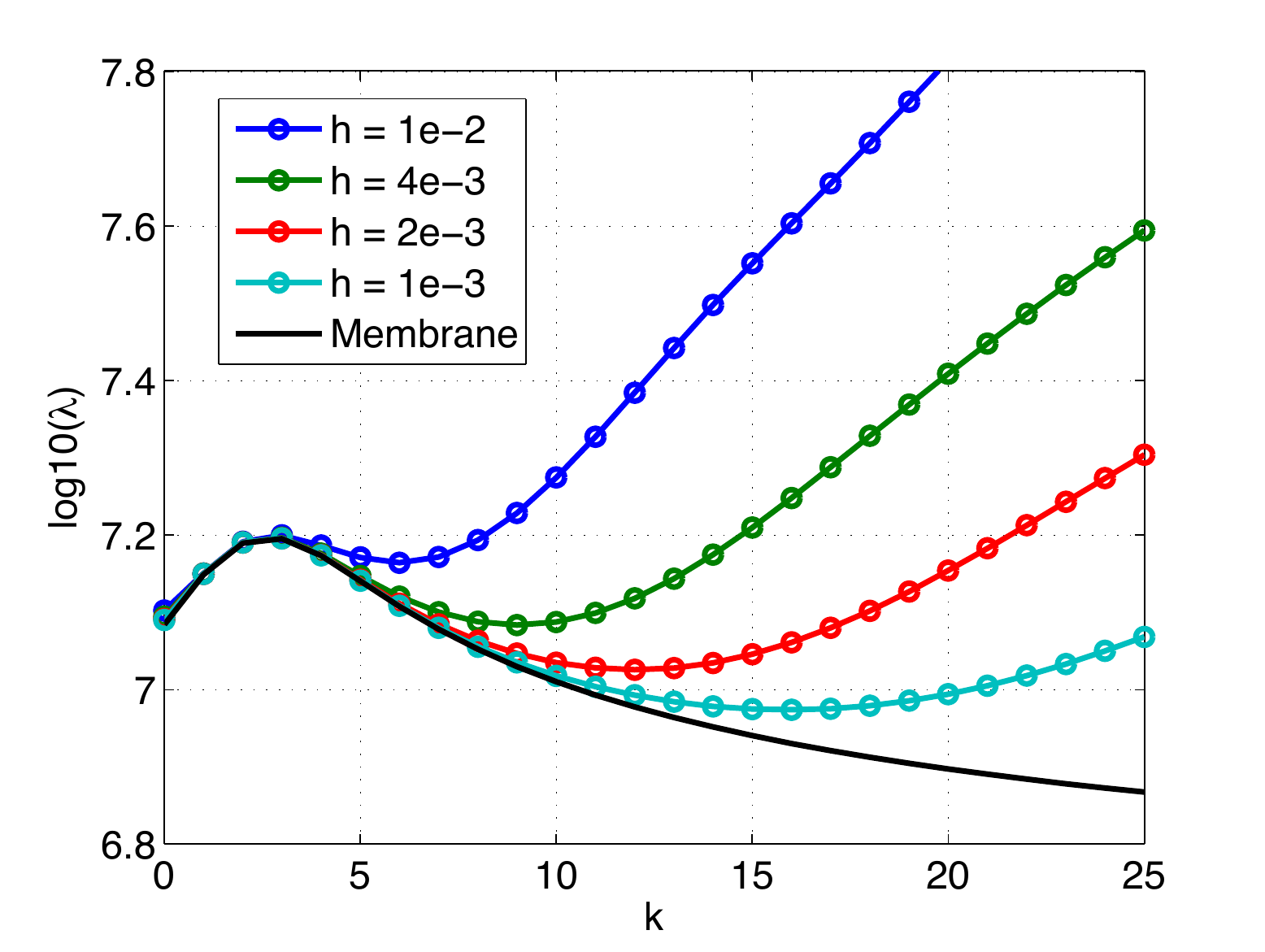}
   \caption{\small Cylinders (left) and Airy barrels (right): First eigenvalues of $\bL^{(k)}$ depending on $k$ for several values of thickness compared with first eigenvalues of $\bM^{(k)}$ (membrane).}
\label{f:memb}\end{figure}

For cylinders and cones, the sequence $\mu^{(k)}$ tends to $0$ as $k\to\infty$. Hence the sensitivity. For elliptic shells, the sequence $\mu^{(k)}$ tends to a limit that coincides with the minimum of the function $\rH_0$. Sensitivity depends on whether  $\mu^{(k)}$ has a minimum lower than this value. From our previous study it appears that any configuration is possible.
For hyperbolic shells,  $\mu^{(k)}$ tends to $0$ so sensitivity occurs, cf \cite{BeiraoLovadina2008,ArtioliBeiraoHakulaLovadina2008,ArtioliBeiraoHakulaLovadina2009} but the analysis of the coefficients in asymptotics cannot be performed by the method of \cite{ChDaFaYo16a}.

A natural question that comes to mind is: Are there other types of axisymmetric structures that behave similarly?
Rings (curved beams) are conceivable -- The recent work \cite{ForgitLemoineLemarrecRakoto16} tends to prove that sensitivity does not occur for thin rings with circular or square sections.
\bibliographystyle{siam}
\bibliography{shell}

\end{document}